\newtheorem{thm}{Theorem}[section]
\newtheorem{cor}[thm]{Corollary}
\newtheorem{pro}[thm]{Proposition}
\newtheorem{obs}[thm]{Observation}
\theoremstyle{definition}
\newtheorem{que}[thm]{Question}
\theoremstyle{remark}
\newtheorem*{rem}{Remark}
\begin{document}
\title{Characterizing simplex graphs}

\footnotetext[1]{The work is partially supported by the National Natural Science Foundation of China (Grant No. 12071194, 11571155).}

\author{Yan-Ting Xie$\thanks{E-mail address: ~ls\_xieyt@lzu.edu.cn (Y.-T. Xie).}$, ~Shou-Jun Xu$\thanks{Corresponding author. E-mail address: ~shjxu@lzu.edu.cn (S.-J. Xu).}$}

\date{\small School of Mathematics and Statistics, Gansu Center for Applied Mathematics, \\Lanzhou University,
Lanzhou, Gansu 730000, China}

\maketitle
\begin{abstract}
The simplex graph $S(G)$ of a graph $G$ is defined as the graph whose vertices are the cliques of $G$ (including the empty set), with two vertices being adjacent if, as cliques of $G$, they differ in exactly one vertex. Simplex graphs form a subclass of median graphs and include many well-known families of graphs, such as gear graphs, Fibonacci cubes and Lucas cubes. 

In this paper, we characterize simplex graphs from four different perspectives: the first focuses on a graph class associated with downwards-closed sets---namely, the daisy cubes; the second identifies all forbidden partial cube-minors of simplex graphs; the third is from the perspective of the $\Theta$ equivalent classes; and the fourth explores the relationship between the maximum degree and the isometric dimension. Furthermore,  very recently, Betre et al.\ [K. H. Betre, Y. X. Zhang, C. Edmond, Pure simplicial and clique complexes with a fixed number of facets, 2024, arXiv: 2411.12945v1] proved that an abstract simplicial complex (i.e., an independence system) of a finite set can be represented to a clique complex of a graph if and only if it satisfies the Weak Median Property. As a corollary, we rederive this result by using the graph-theoretical method.

\setlength{\baselineskip}{17pt}
{} \vskip 0.1in \noindent%
\textbf{Keywords:} Simplex graphs; Median graphs; Daisy cubes; Partial cube-minors; Abstract simplicial complexes.
\end{abstract}
\section{Introduction}
Let $G$ be a graph with vertex set $V(G)$ and edge set $E(G)$. A {\em clique} of $G$ is a subset of $V(G)$ in which each pair of  vertices are adjacent.  The {\em simplex graph} $S(G)$, introduced by Bandelt and van de Vel \cite{bv89}, is the graph whose vertices are the cliques of $G$ (including the empty set), with two vertices being adjacent if, as cliques of $G$, they differ in exactly one vertex (see Fig. \ref{fig:SimplexGraph} for an example).
\begin{figure}[!htbp]
\centering
\setlength{\unitlength}{1mm}
\begin{picture}(90,55)
\put(0,0){\scalebox{0.5}[0.5]{\includegraphics{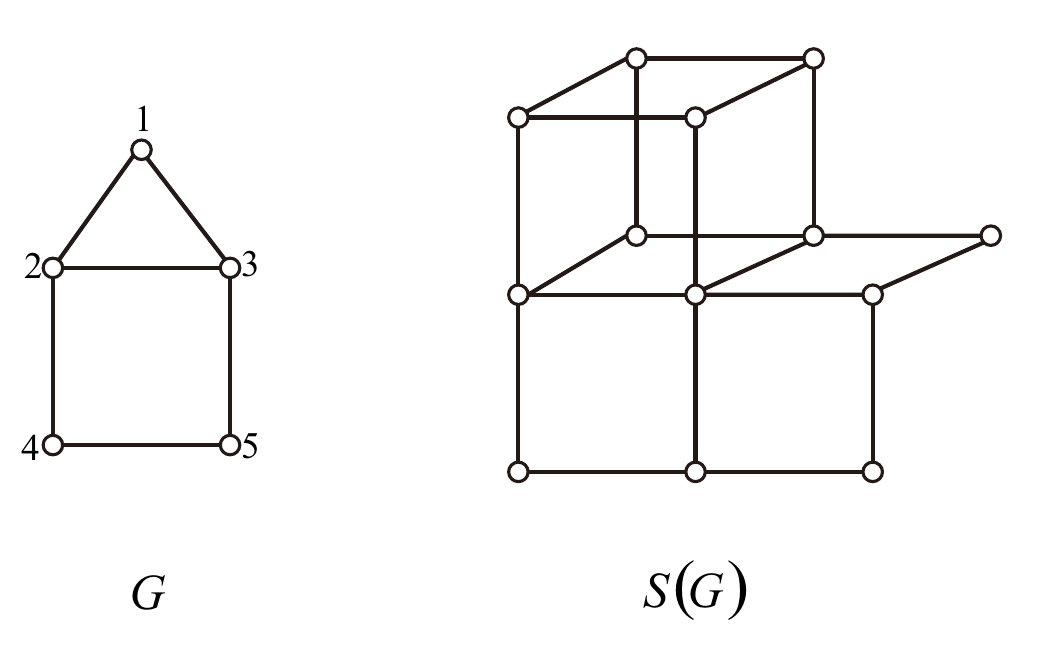}}}
\put(59.5,27){{\fontsize{9pt}{16pt}\selectfont $\emptyset$}}
\put(59.5,42){{\fontsize{9pt}{16pt}\selectfont $\{1\}$}}
\put(69.5,50){{\fontsize{9pt}{16pt}\selectfont $\{1,3\}$}}
\put(49.2,51.2){{\fontsize{9pt}{16pt}\selectfont $\{1,2,3\}$}}
\put(35.5,46){{\fontsize{9pt}{16pt}\selectfont $\{1,2\}$}}
\put(38,29){{\fontsize{9pt}{16pt}\selectfont $\{2\}$}}
\put(45.6,36){{\fontsize{9pt}{16pt}\selectfont $\{2,3\}$}}
\put(69.5,36.5){{\fontsize{9pt}{16pt}\selectfont $\{3\}$}}
\put(81.5,36.5){{\fontsize{9pt}{16pt}\selectfont $\{3,5\}$}}
\put(74.5,27.5){{\fontsize{9pt}{16pt}\selectfont $\{5\}$}}
\put(72.5,12){{\fontsize{9pt}{16pt}\selectfont $\{4,5\}$}}
\put(57,12){{\fontsize{9pt}{16pt}\selectfont $\{4\}$}}
\put(35.5,12){{\fontsize{9pt}{16pt}\selectfont $\{2,4\}$}}
\end{picture}
\caption{Graph $G$ with $V(G)=\{1,2,3,4,5\}$ and its simplex graph $S(G)$. In this example, $S(G)$ is isomorphic to a 5-dimensional Fibonacci cube.}{\label{fig:SimplexGraph}}
\end{figure}

%Let $G$ be a graph. A word $v$ of length $n$ over $\{0,1\}$ is called a binary string of length $n$,. Especially, we denote $1^n:=(1,1,\cdots, 1)$ and $0^n:=(0,0,\cdots 0)$ shortly. The {\em weight} of $v\in B^n$ is $w(v)=\sum_{i=1}^nv_i$, in other words, $w(v)$ is the number of $1$s in the binary string $v$.
Let $B^n$ be the set of binary strings of length $n$, i.e., $B^n:=\{v=a_1a_2\cdots a_n|a_i\in\{0,1\},\mbox{ for }i=1, 2, \cdots, n\}$. A {\em hypercube of dimension $n$} (or {\em $n$-cube} for short) $Q_n$ is a graph with vertex set $B^n$ and two vertices are adjacent if their corresponding binary strings differ in exactly one coordinate. By convention, we denote the {\em complete graph}, the {\em path} and the {\em cycle} of order $n$ by $K_n$, $P_n$, and $C_n$, respectively.

It is known that many interesting graph classes are simplex graphs. For example: $S(K_n)(\cong Q_n)$, the {\em gear graph} $S(C_n)$, also called the {\em bipartite wheel} \cite{k09}, the {\em Fibonacci cube} $S(\bar{P_n})$ \cite{k13}, and the {\em Lucas cube} $S(\bar{C_n})$ \cite{mcs01,t13}, where $\bar{G}$ represents the complement of the graph $G$.

Let $G$ be a graph with vertex set $V(G)$. For $u, v\in V(G)$, the {\em distance} $d_{G}(u,v)$ (we will drop the subscript $G$ here or the subscript (or the superscript) $G$ in other notations if no confusion arises) is the length of a shortest path between $u$ and $v$ in $G$. A subgraph $H$ of $G$ is called {\em isometric} if, for any $u, v\in V(H)$, $d_H(u,v)=d_G(u,v)$.  A graph $G$ is called a {\em partial cube} if it is isomorphic to an isometric subgraph of $Q_n$ for some integer $n$. Thus, for a partial cube $G$, $V(G)$ can be viewed as a subset of $B^n$. Note that the distance $d_{Q_n}(u,v)$ between vertices $u$ and $v$ in $Q_n$ is the {\em Hamming distance}, i.e., the number of coordinates at which the corresponding symbols in the two binary strings $u$ and $v$ of lengths $n$ differ, so is the distance $d_{G}(u,v)$ in a partial cube $G$ by the definition. $G$ is called a {\em median graph} if, for every three different vertices $u, v, w\in V(G)$, there exists exactly one vertex $x\in V(G)$ (possibly $x\in\{u,v,w\}$), called the {\em median} of $u, v, w$, satisfying the following conditions: $d(u,x)+d(x,v)=d(u,v)$, $d(u,x)+d(x,w)=d(u,w)$, $d(v,x)+d(x,w)=d(v,w)$. Note that median graphs form a significant subclass of partial cubes (see \cite{bv87,km99,m78,m80a,m80b,m90,m11,xfx24}, as well as Chapter 12 of the book \cite{hik11}). 

Bandelt and van de Vel  proved the following result:
\begin{pro}{\em\cite{bv89}}\label{pro:SimplexGraphvsMedianGraph}
For any graph $G$,  $S(G)$ is a median graph.
\end{pro}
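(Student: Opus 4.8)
The plan is to realize $S(G)$ as an isometric subgraph of a hypercube that is closed under the hypercube's median operation, and then invoke the standard fact that such a subgraph is itself a median graph. Write $n=|V(G)|$ and identify $V(G)$ with $\{1,\dots,n\}$. First I would set up the map $\iota$ sending a clique $C$ of $G$ to its characteristic string $\chi_C\in B^n$, whose $i$-th coordinate is $1$ if $i\in C$ and $0$ otherwise. This map is injective, and two cliques differ in exactly one vertex precisely when their characteristic strings have Hamming distance $1$; hence $\iota$ identifies $S(G)$ with the subgraph of $Q_n$ induced on $\{\chi_C : C\text{ a clique of }G\}$. (The empty clique causes no trouble here: its image is the all-zero string.)

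Next I would verify that this subgraph is isometric. Given cliques $C$ and $D$, a shortest $C$--$D$ path in $Q_n$ has length $|C\triangle D|$, and such a path can be traced inside $S(G)$: first delete the elements of $C\setminus D$ one at a time, so that every intermediate set is a subset of $C$ and hence a clique of $G$, arriving at $C\cap D$; then add the elements of $D\setminus C$ one at a time, each intermediate set being a subset of $D$ and hence a clique. This walk has length $|C\setminus D|+|D\setminus C|=d_{Q_n}(\chi_C,\chi_D)$, so $S(G)$ is an isometric subgraph of $Q_n$, and in particular a partial cube.

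The crucial step is to show that $S(G)$, viewed this way, is closed under the median operation of $Q_n$. Recall (or verify directly, coordinatewise) that $Q_n$ is a median graph in which the unique median of three strings is their coordinatewise majority. So, given cliques $C_1,C_2,C_3$, I must check that the majority set $M=\{i : i\text{ lies in at least two of }C_1,C_2,C_3\}$ is again a clique of $G$. Take distinct $i,j\in M$. The index sets $\{k : i\in C_k\}$ and $\{k : j\in C_k\}$ are subsets of $\{1,2,3\}$, each of size at least $2$, so they intersect; thus some $C_k$ contains both $i$ and $j$, whence $i$ and $j$ are adjacent in $G$. Therefore $M$ is a clique, i.e.\ $\chi_M\in V(S(G))$.

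Finally I would conclude as follows: since $S(G)$ is an isometric subgraph of the median graph $Q_n$ and is closed under the median operation of $Q_n$, for any three vertices of $S(G)$ their $Q_n$-median lies in $S(G)$ and, because distances are inherited, is a median there; conversely any vertex of $S(G)$ that is a median of the three is also a median in $Q_n$ and hence equals that majority vertex. Thus medians in $S(G)$ exist and are unique, so $S(G)$ is a median graph. I expect the only genuine obstacle to be the closure step of the third paragraph — the pigeonhole argument showing the majority set is a clique — since everything else is routine hypercube and partial-cube bookkeeping.
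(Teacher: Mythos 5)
Your proof is correct. The paper itself gives no proof of this proposition --- it is quoted from Bandelt and van de Vel --- but your argument is the standard one and is exactly the mechanism the paper deploys later: the characteristic-vector bijection $\pi$ between $2^{[n]}$ and $B^n$ appears in the proof of Theorem~\ref{thm:CliqueComplex}, and your pigeonhole step showing the majority set of three cliques is again a clique is precisely the verification that a clique complex satisfies the Median Property, which combined with the Birkhoff--Kiss characterization of median-closed set families yields the result. All four of your steps (the embedding, the isometry via the ``down to $C\cap D$, then up to $D$'' path, median-closure, and the transfer of medians between an isometric median-closed subgraph and $Q_n$) are sound, so nothing is missing.
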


\begin{comment}
\begin{itemize}
\item  $S(K_n)$ is a simplex graph $Q_n$.
\item If $G$ is the cycle $C_n$, then $S(G)$ is called 
\item If $G$ is the complement of the path, $\bar{P_n}$, then $S(G)$ is the {\em Fibonacci cube}, denoted by $\Gamma_n$ (see ).
\item If $G$ is the complement of the cycle, $\bar{C_n}$, then $S(G)$ is the {\em Lucas cube}, denoted by $\Lambda_n$ (see \cite{mcs01,t13}).
\end{itemize}
\end{comment}

%Let $G$ be a graph with a subset $S$ of vertex set $V(G)$. The subgraph induced by $S$ is denoted by $G[S]$.  
A {\em partial order} on $B^n$ is defined as follows: $a_1a_2\cdots a_n\leq b_1b_2\cdots b_n$ if $a_i\leq b_i$ holds for $1\leq i\leq n$. Let $X$ be a subset of $B^n$.  The subgraph of $Q_n$ induced by the set $\{v\in B^n|v\leq x\mbox{ for some }x\in X\}$ is called the {\em daisy cube generated by $X$}, denoted by $Q_n(X)$ \cite{km19}. In this paper, we give the first characterization of simplex graphs: precisely the intersection of median graphs and daisy cubes.

Let $G$ be a connected graph. The {\em Djokovi\'c-Winkler relation} (see \cite{d73,w84}) $\Theta_G$ is a binary relation on $E(G)$ defined as follows: For edges $uv, xy$ in $G$, $$uv\,\Theta_G\,xy\iff d(u,x)+d(v,y)\neq d(u,y)+d(v,x).$$ If $G$ is bipartite, there is an equivalent definition of the Djokovi\'c-Winkler relation: 
\begin{align*}
uv\,\Theta_G\,xy\iff d(u,x)=d(v,y) \mbox{ and } d(u,y)=d(v,x).
\end{align*}
%$uv\,\Theta_G\,xy\iff d(u,x)=d(v,y)$ and $d(u,y)=d(v,x)$.(we will drop the subscript $G$ if no confusion can occur). 
Winkler \cite{w84} proved that a graph $G$ is a partial cube if and only if $G$ is bipartite and $\Theta$ is an equivalence relation on $E(G)$.

For an edge $uv$ of a graph $G$, the following sets play critical roles:
\begin{itemize}
\item $W^G_{uv}:=\{w\in V(G)|d(u,w)<d(v,w)\}$,
\item $W^G_{vu}:=\{w\in V(G)|d(v,w)<d(u,w)\}$,
\item $F^G_{uv}:=\{ab\in E(G)|a\in W^G_{uv}\mbox{ and }b\in W^G_{vu}\}$,
\item $U^G_{uv}:=\{w\in V(G)|w\in W^G_{uv}\mbox{ and }w\mbox{ is incident with an edge in }F^G_{uv}\}$,
\item $U^G_{vu}:=\{w\in V(G)|w\in W^G_{vu}\mbox{ and }w\mbox{ is incident with an edge in }F^G_{uv}\}$.
\end{itemize}
% Djoković [26] characterized partial cubes in the following simple but pretty way: a graph G = (V , E ) can be isometrically embedded in a hypercube if and only if G is bipartite and for any edge uv, the sets W(u,v) = {x ∈ V : d(x,u) < d(x,v)} and W(v,u) = {x ∈ V : d(x,v) < d(x,u)} are convex. In this case, W(u,v) ∪ W(v,u) = V, whence W(u,v) and W(v,u) are complementary convex subsets of G, called halfspaces. The edges between W (u, v) and W (v, u) correspond to a coordinate in a hypercube embedding of G.

If $G$ is a partial cube with an edge $uv$, then $F_{uv}$ is the {\em $\Theta$ equivalent class} (abbreviated, {\em $\Theta$-class}) containing $uv$, and $W_{uv}$ and $W_{vu}$ are complementary convex subsets of $G$, called {\em halfspaces}.  Furthermore, every $\Theta$-class $F_{uv}$ is an edge cutset of $G$. Without emphasizing the edges, the $\Theta$-classes are also denoted by $\theta_1,\theta_2,\cdots$. %Additionally, $W_{uv}$ and $W_{vu}$ are referred to as the {\em halfspaces} of $G$.

Let $G$ be a partial cube which can be embedded into $Q_n$ isometrically (i.e., $V(G)\subseteq B^n$). Each $\Theta$-class of $G$ corresponds to a coordinate in the binary strings (see \cite{d73}); specifically, $\theta_i=\{ab\in E(G)|a=a_1\cdots a_{i-1}0a_{i+1}\cdots a_n,b=a_1\cdots a_{i-1}1a_{i+1}\cdots a_n\}$ for an integer $1\leq i\leq n$. Furthermore, if $uv\in\theta_i$ and the $i$-th coordinate of $u$ is 0, then $W_{uv}=\{a=a_1a_2\cdots a_n\in V(G)|a_i=0\}$, $W_{vu}=\{a=a_1a_2\cdots a_n\in V(G)|a_i=1\}$, otherwise $W_{uv}=\{a=a_1a_2\cdots a_n\in V(G)|a_i=1\}$, $W_{vu}=\{a=a_1a_2\cdots a_n\in V(G)|a_i=0\}$. An {\em elementary restriction} with respect to $\theta_i$ consists in taking one of the subgraphs of $G-\theta_i$ ($G-\theta_i$ is the graph obtained by deleting all edges in $\theta_i$ from $G$) induced by the corresponding halfspace, which we will denote by $\rho_{i}^0(G)$ and $\rho_{i}^1(G)$, respectively, i.e., $\rho_{i}^0(G)$ (resp. $\rho_{i}^1(G)$) is the subgraph of $G$ induced by the halfspace $\{a=a_1a_2\cdots a_n\in V(G)|a_i=0\}$ (resp. $\{a=a_1a_2\cdots a_n\in V(G)|a_i=1\}$). Now applying twice the elementary restrictions with respect to two $\Theta$-classes $\theta_i$ and $\theta_j$, independently of their orders, we will obtain one of the four (possibly empty) subgraphs induced by the $\rho_{i}^0(G)\cap \rho_{j}^0(G)$,  $\rho_{i}^0(G)\cap \rho_{j}^1(G)$,  $\rho_{i}^1(G)\cap \rho_{j}^0(G)$, and  $\rho_{i}^1(G)\cap \rho_{j}^1(G)$. More generally, a {\em restriction} is a subgraph of $G$ induced by the intersection of a set of (non-complementary) halfspaces of $G$. If all of edges in an $\Theta$-class $\theta_i$ are contracted, the resulting graph is called the ($\theta_i$-){\em contraction} of $G$. It is known that performing restrictions and contractions a finite number of times on $G$ results in a graph $G'$ that remains a partial cube, regardless of the order of the operations (see \cite{ckm20,m18}) and $G'$ is called  a {\em partial cube-minor} (abbreviated, {\em pc-minor}) of $G$. Note that possibly $G'=G$.  A pc-minor $G'$ of $G$ is {\em proper} if $G'\neq G$. A class $\mathscr{C}$ of partial cubes is called {\em pc-minor-closed} if  $G\in\mathscr{C}$ and $G'$ being a pc-minor of $G$ implies $G'\in\mathscr{C}$. For a (possibly infinite) set $X$ of partial cubes, let $\mathscr{F}(X)$ be the class of all partial cubes that do not contain any partial cube in $X$ as a  pc-minor. For a pc-minor-closed partial cube class $\mathscr{C}$, there exists a set $X$ of partial cubes such that $\mathscr{C}=\mathscr{F}(X)$ \cite{m18}. In this case, we say that the partial cubes in $X$ are the {\em forbidden pc-minors} of $\mathscr{C}$ or $\mathscr{C}$ is {\em $X$-pc-minor free}.  It is obvious that if  $G_1$ is a forbidden pc-minor of the class $\mathscr{C}$ and $G_1$ is a pc-minor of $G_2$, then $G_2$ is also a forbidden pc-minor of $\mathscr{C}$. %Clearly, any such class has a (possibly infinite) set $X$ of partial cubes which are not in the class, but every pc-minor of them is---these partial cubes are the {\em forbidden pc-minors} of the class. Thus, we can also say that $\mathscr{C}$ is {\em $X$-pc-minor free}, denoted by $\mathscr{C}:=\mathscr{F}(X)$, that is, if $X=\{G_1,G_2,\cdots\}$ is a set of partial cubes, then $\mathscr{F}(X)$ is the class of all partial cubes such that no $G_i$ ($i=1,2, \cdots)$ can be obtained as a pc-minor of them. 
For two sets of partial cubes $X$ and $Y$, it is clear by definition that $\mathscr{F}(X\cup Y)=\mathscr{F}(X)\cap\mathscr{F}(Y)$. For brevity, $\mathscr{F}(\{G_1, G_2,\cdots\})$ is written as $\mathscr{F}(G_1,G_2,\cdots)$. It was proved that the classes of median graphs (see Lemma 12.6 and Exercise 12.7 in \cite{hik11}) and daisy cubes (see \cite{km19,t20}) are pc-minor-closed. Consequently,  the class of simplex graphs is also pc-minor-closed by our first characterization. It is known that the class of median graphs is precisely $\mathscr{F}(Q_3^-,C_6)$, where $Q_3^-$ is the graph obtained by deleting one vertex from 3-cube $Q_3$ (see Proposition \ref{pro:ForbiddenMinorofMedianGraph} below). However, a similar result for daisy cubes remains unknown. In this paper, our second characterization of simplex graphs is presented: the $\{P_4\}$-pc-minor free median graphs, or equivalently, the partial cubes in $\mathscr{F}(P_4,Q_3^-,C_6)$.

Let $G$ be a partial cube. A $\Theta$-class $F_{uv}$ is  called {\em peripheral} if either $U_{uv}=W_{uv}$ or $U_{vu}=W_{vu}$. Our third characterization of simplex graphs states  that a graph is a simplex graph if and only if it is a median graph whose every $\Theta$-class is peripheral.

The {\em isometric dimension} of  a partial cube $G$, denoted by $\mathrm{idim}(G)$, is the smallest integer $n$ such that $G$ can be isometrically embedded into $Q_n$. This dimension coincides with the number of $\Theta$-classes of $G$ (see \cite{d73}). It is also known that any two adjacent edges in a partial cube belong to  different $\Theta$-classes (see Proposition \ref{pro:DWRelationonGeodesic} in the next section). Thus, $\mathrm{idim}(G)\geq \mathrm{deg}(v)$ for any vertex $v\in V(G)$, where $\mathrm{deg}(v)$ is the {\em degree} of $v$. Our fourth characterization of simplex graphs is that a graph $G$ is a simplex graph if and only if $G$ is a median graph containing a vertex $v\in V(G)$ such that $\mathrm{deg}(v)=\mathrm{idim}(G)$.

The main theorem of the present paper is stated as follows:
\begin{thm}\label{thm:MainResult}
Let $G$ be a finite median graph. The following statements are equivalent:
\begin{enumerate}
\item $G$ is a simplex graph;
\item $G$ is a daisy cube;
\item $G\in\mathscr{F}(P_4)$;
\item every $\Theta$-class of $G$ is peripheral;
\item there exists a vertex $v\in V(G)$ such that $\mathrm{deg}(v)=\mathrm{idim}(G)$.
\end{enumerate}
\end{thm}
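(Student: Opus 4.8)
My plan is to prove $(1)\Leftrightarrow(2)\Leftrightarrow(5)$, then $(2)\Leftrightarrow(4)$, and finally $(3)\Leftrightarrow(4)$. Throughout I will use freely that a finite median graph is a partial cube closed under \emph{coordinatewise majority} (for vertices $u,v,w$ the string whose $i$th symbol is the majority of the $i$th symbols of $u,v,w$ is their median, and in particular lies in $V(G)$), and that restrictions and contractions of median graphs are again median.

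\emph{The block $(1)\Leftrightarrow(2)\Leftrightarrow(5)$.} For $(1)\Rightarrow(2)$, identify each clique of $H$ with its characteristic vector in $B^{|V(H)|}$; since the cliques of $H$ form a downwards-closed family, $V(S(H))=\{w:w\le\chi_C\text{ for some maximal clique }C\text{ of }H\}$, and as two cliques differ in exactly one vertex iff their vectors have Hamming distance $1$, $S(H)=Q_{|V(H)|}(\{\chi_C:C\text{ a maximal clique}\})$, a daisy cube. For $(2)\Rightarrow(5)$, write $G=Q_n(X)$ with every coordinate used; the all-zero string $\mathbf 0$ is in $V(G)$, and for each $i$ some vertex of $G$ has $i$th coordinate $1$, so $e_i\in V(G)$ by downward closure; hence $\deg(\mathbf 0)=n=\mathrm{idim}(G)$. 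For $(5)\Rightarrow(1)$, choose $v_0$ with $\deg(v_0)=\mathrm{idim}(G)=n$ and embed $G$ into $Q_n$ with $v_0=\mathbf 0$, so that the neighbours of $v_0$ are exactly $e_1,\dots,e_n$; let $H$ be the graph on $\{1,\dots,n\}$ with $ij\in E(H)\iff e_i+e_j\in V(G)$. The assignment $v\mapsto\mathrm{supp}(v)$ is then an isomorphism $G\to S(H)$: it is injective and (adjacency in a partial cube is Hamming distance $1$) edge-preserving in both directions; its image consists of cliques of $H$, because for $i,j\in\mathrm{supp}(v)$ the coordinatewise majority of $v,e_i,e_j$ equals $e_i+e_j$, which therefore lies in $V(G)$; and it is onto the cliques of $H$, since by strong induction on $|C|$ (the cases $|C|\le2$ being immediate) the coordinatewise majority of $\chi_{C\setminus\{x\}},\chi_{C\setminus\{y\}},\chi_{\{x,y\}}$ equals $\chi_C$ for distinct $x,y\in C$.

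\emph{The block $(2)\Leftrightarrow(4)$.} For $(2)\Rightarrow(4)$, if $G=Q_n(X)$ then any vertex $v$ with $v_i=1$ satisfies $v-e_i\le v\in V(G)$, so $v$ is incident with a $\theta_i$-edge; thus the whole $1$-side of $\theta_i$ lies in the corresponding $U$, i.e.\ $\theta_i$ is peripheral. For $(4)\Rightarrow(1)$ I induct on $\mathrm{idim}(G)$. Fix a $\Theta$-class $F_{uv}$; reorienting if needed we may assume $W_{vu}=U_{vu}$, so $F_{uv}$ matches $G[W_{vu}]$ isomorphically onto the convex subgraph $G[U_{uv}]$ of $G[W_{uv}]$, and $G$ is the peripheral expansion of $G[W_{uv}]$ along $G[U_{uv}]$. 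Now $G[W_{uv}]$ is median with $\mathrm{idim}(G)-1$ $\Theta$-classes, all peripheral (peripherality is inherited by restrictions), so by the induction hypothesis $G[W_{uv}]\cong S(K)$ for some graph $K$. Every convex subgraph of $S(K)$ is, on intersecting the halfspaces that define it, again a simplex graph $S(K[N])$ for some $N\subseteq V(K)$; letting $K'$ be $K$ with one new vertex $z$ of neighbourhood $N$, the simplex graph $S(K')$ is exactly the peripheral expansion of $S(K)$ along $S(K[N])$. Hence $G\cong S(K')$, establishing $(4)\Rightarrow(1)$ and so $(1)\Leftrightarrow(2)\Leftrightarrow(4)\Leftrightarrow(5)$.

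\emph{Linking $(3)$.} The property ``every $\Theta$-class is peripheral'' is preserved by elementary restrictions and contractions (a short computation using that adjacent edges of a partial cube lie in distinct $\Theta$-classes), hence by pc-minors; since the middle edge of $P_4$ spans a non-peripheral $\Theta$-class, no median graph all of whose $\Theta$-classes are peripheral can have $P_4$ as a pc-minor, which is $(4)\Rightarrow(3)$. For $(3)\Rightarrow(4)$ I argue by contradiction. The class of median graphs with all $\Theta$-classes peripheral is pc-minor-closed (by $(1)\Leftrightarrow(4)$ it is exactly the class of median simplex graphs, and both median graphs and daisy cubes are pc-minor-closed), so if $(3)\Rightarrow(4)$ failed there would be a median graph $H$ carrying a non-peripheral $\Theta$-class all of whose proper pc-minors have every $\Theta$-class peripheral, and it suffices to show $H\cong P_4$ (contradicting $H\in\mathscr F(P_4)$). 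This splits into: (i) $\mathrm{idim}(H)\le3$, obtained by showing that if $H$ has at least four $\Theta$-classes then a non-peripheral $\Theta$-class $\theta$ --- exploiting that $H/\theta$ is already a simplex graph and that $H$ arises from $H/\theta$ by expansion along a convex set split into two nonempty edgeless pieces --- yields a strictly smaller median pc-minor that still has a non-peripheral $\Theta$-class, contradicting minimality; and (ii) a finite check that the only median subgraph of $Q_3$ with a non-peripheral $\Theta$-class is $P_4$ (such an $H$ is an expansion of a median subgraph of $Q_2$ along a convex subset whose complement splits into two nonempty edgeless parts, and only $P_3$, split at its centre, admits one). Step (i) is the main obstacle: a non-peripheral $\Theta$-class does not in general produce an isometric $P_4$ both of whose ends lie off its boundary, so the descent to three $\Theta$-classes must be arranged with care and will in general require a contraction rather than merely a restriction to land exactly on $P_4$.
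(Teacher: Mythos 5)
Your blocks $(1)\Leftrightarrow(2)\Leftrightarrow(5)$, $(2)\Rightarrow(4)$ and $(4)\Rightarrow(3)$ are correct, and your $(5)\Rightarrow(1)$ is in fact cleaner than the paper's: by showing directly that $\mathrm{supp}(v)$ is a clique of $H$ (via the coordinatewise majority of $v,e_i,e_j$) you close the surjectivity issue without the crossing-graph and cube-polynomial machinery the paper uses to match vertex counts. But there are two genuine gaps. First, in $(4)\Rightarrow(1)$: a convex subgraph of $S(K)$ is a restriction of the form $\{C: A\subseteq C,\ C\cap B=\emptyset\}$, which is \emph{isomorphic} to $S(K[N])$ for some $N$ but, when $A\neq\emptyset$, is not the canonical copy $\{C: C\subseteq N\}$; a peripheral expansion depends on the position of the convex subgraph inside $S(K)$, not merely on its isomorphism type. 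Concretely, $P_4$ is the peripheral expansion of $P_3=S(\overline{K_2})$ along a leaf, a convex subgraph isomorphic to $S(K[\emptyset])=K_1$, yet your recipe would return $S(\overline{K_2}+z)=K_{1,3}\neq P_4$. The step is repairable --- peripherality of $\theta_a$ for $a\in A$ forces $a$ to be universal in $K$ (no new vertex is incident with a $\theta_a$-edge, so the $a$-free halfspace must equal its $U$-set), and flipping a universal coordinate is an automorphism of $S(K)$ moving $a$ from $A$ to $B$ --- but your argument never invokes hypothesis (4) for the $\Theta$-classes other than $F_{uv}$, and without that the conclusion is false.

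Second, and more seriously, $(3)\Rightarrow(4)$ is not actually proved: your step (i), the descent from a pc-minor-minimal median graph with a non-peripheral $\Theta$-class and $\mathrm{idim}\geq 4$ to a strictly smaller one, is precisely the technical core of the theorem, and you acknowledge yourself that it is ``the main obstacle'' and must still ``be arranged with care.'' The paper resolves it by choosing $a\in W_{uv}\setminus U_{uv}$ and $b\in W_{vu}\setminus U_{vu}$ at minimum distance, writing an $a,b$-geodesic $au_1\cdots u_kv_kb$ with $u_1,\dots,u_k\in U_{uv}$, contracting $F_{u_1u_2},\dots,F_{u_{k-1}u_k}$, and then using preservation of crossing under contraction (Propositions \ref{pro:CrossingunderContraction} and \ref{pro:Crossin4Cycle}) to show the contracted endpoints remain outside the relevant $U$-sets, so that a convex $P_4$ survives as a pc-minor. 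Some argument of this kind is unavoidable, and as written your proof is incomplete exactly here.
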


The following corollary can be derived. Let $\mathscr{M}$ be the class of median graphs. Then:
\begin{pro}{\em\cite{ckm20}}\label{pro:ForbiddenMinorofMedianGraph}
$\mathscr{M}$ is pc-minor-closed. Moreover, $\mathscr{M}=\mathscr{F}(Q_3^-,C_6)$.
\end{pro}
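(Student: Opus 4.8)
The first assertion is essentially what was noted before the statement: a restriction is an intersection of halfspaces, hence a convex subgraph, and convex subgraphs as well as $\Theta$-contractions of a median graph are again median (Lemma~12.6 and Exercise~12.7 of \cite{hik11}), so $\mathscr{M}$ is closed under restrictions and contractions. For the inclusion $\mathscr{M}\subseteq\mathscr{F}(Q_3^-,C_6)$ it then suffices to check that $Q_3^-$ and $C_6$ are partial cubes that fail to be median graphs: both embed isometrically into $Q_3$, in $C_6$ the three vertices that are pairwise at distance $2$ have no median, and in $Q_3^-=Q_3-\{111\}$ the vertices $011,101,110$ have no median (the only candidate, their coordinatewise majority string $111$, is absent). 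As $\mathscr{M}$ is pc-minor-closed, no median graph has $Q_3^-$ or $C_6$ as a pc-minor.

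For the reverse inclusion the plan is to argue by contradiction from a counterexample with $|V(G)|$ minimal. So let $G\in\mathscr{F}(Q_3^-,C_6)$ be a partial cube that is not a median graph and has as few vertices as possible. Since $G$ is not median there are $u,v,w\in V(G)$ whose coordinatewise majority string $m$ --- the unique possible median of $u,v,w$ in a partial cube --- is not a vertex of $G$. The convex hull of $\{u,v,w\}$ is an intersection of halfspaces, hence a restriction and so a pc-minor of $G$; it still omits $m$, hence is a non-median partial cube in $\mathscr{F}(Q_3^-,C_6)$, and by minimality it equals $G$. Thus, after embedding $G$ isometrically in $Q_{\mathrm{idim}(G)}$, every coordinate separates two of $u,v,w$: the coordinate set splits into the nonempty blocks $B_u,B_v,B_w$ (the coordinates on which $u$, $v$, $w$ is the minority), $\mathrm{idim}(G)=|B_u|+|B_v|+|B_w|$, and $m$ differs from $u$, $v$, $w$ exactly on $B_u$, $B_v$, $B_w$.

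The key step will be to show $|B_u|=|B_v|=|B_w|=1$. I would do this by contradiction: if, say, $|B_w|\ge2$, pick a coordinate $\theta\in B_w$ and let $m^{(\theta)}$ be $m$ with that coordinate flipped. There are two cases. If $m^{(\theta)}\notin V(G)$, then in $G/\theta$ the images of $u,v,w$ are still three distinct vertices whose majority string is the projection of $m$, and that projection is not a vertex of $G/\theta$ (a vertex projecting to it would be $m$ or $m^{(\theta)}$, both missing); so $G/\theta$ is a non-median partial cube, a pc-minor of $G$ with fewer vertices --- contradicting minimality. If instead $m^{(\theta)}\in V(G)$, a short coordinate computation shows that the triple $\{u,v,m^{(\theta)}\}$ again has majority string $m\notin V(G)$ but has active coordinates exactly $B_u\cup B_v\cup\{\theta\}$, so the convex hull of $\{u,v,m^{(\theta)}\}$ is a non-median pc-minor of isometric dimension $|B_u|+|B_v|+1<\mathrm{idim}(G)$, hence again a smaller counterexample --- contradiction. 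Therefore $\mathrm{idim}(G)=3$, so $G$ is an isometric subgraph of $Q_3$ that is not a median graph; by inspection the only such graphs are $Q_3^-$ and $C_6$, so $G$ has a pc-minor in $\{Q_3^-,C_6\}$, contradicting $G\in\mathscr{F}(Q_3^-,C_6)$. The points requiring care are this last inspection, the coordinate bookkeeping in the two cases above, and the fact that the coordinatewise majority vector is the only possible median in a partial cube; everything else relies on the already-cited stability of (median) partial cubes under restriction and contraction --- the same structural tools that underlie Theorem~\ref{thm:MainResult}, which is why the statement is recorded here as a consequence.
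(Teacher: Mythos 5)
The paper does not prove this proposition at all: it is imported verbatim from Chepoi--Knauer--Marc \cite{ckm20} (with the pc-minor-closedness of $\mathscr{M}$ attributed to \cite{hik11}), so there is no in-paper argument to compare against. Your proposal is a correct, self-contained proof. The easy direction is handled exactly as one would expect (convex subgraphs and $\Theta$-contractions of median graphs are median; $Q_3^-$ and $C_6$ are non-median partial cubes). For the hard inclusion your minimal-counterexample scheme works: reducing to $G$ equal to the convex hull of a medianless triple $u,v,w$ is legitimate via Proposition~\ref{pro:ConvexSubgraph} (convex subgraphs are restrictions, hence pc-minors), the coordinatewise-majority string is indeed the unique median candidate in any isometric subgraph of a hypercube, and both halves of your case analysis for $|B_w|\ge 2$ check out (in Case~1 the images of $u,v,w$ stay distinct because $B_u,B_v\ne\emptyset$, and a preimage of the projected majority would have to be $m$ or $m^{(\theta)}$; in Case~2 the triple $u,v,m^{(\theta)}$ has majority $m$ and strictly fewer active coordinates). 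Three small points deserve an explicit line when you write this up: (a) the blocks $B_u,B_v,B_w$ are nonempty because every $\Theta$-class of the convex hull must separate two of $u,v,w$ and because $B_w=\emptyset$ would force $w=m$; (b) in Case~2 the hull of $\{u,v,m^{(\theta)}\}$ is a \emph{proper} convex subgraph (hence has fewer vertices) precisely because its isometric dimension drops; (c) the final ``inspection'' is cleaner if phrased not as a classification of all non-median isometric subgraphs of $Q_3$ but as the observation that your $G$, being the convex hull of $\{011,101,110\}$ with $111$ absent, must contain $001,010,100$ and hence is either $C_6$ or $Q_3^-$ according to whether $000$ is present. With those details filled in, your argument gives the reader a proof of a statement the paper only cites, using exactly the restriction/contraction machinery the paper already sets up.
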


As a corollary of Theorem \ref{thm:MainResult} and Proposition \ref{pro:ForbiddenMinorofMedianGraph}, we obtain:
\begin{cor}\label{cor:ForbiddenMinorofSimplexGraph}
Let $G$ be a partial cube. $G$ is a simplex graph if and only if $G\in\mathscr{F}(P_4,Q_3^-,C_6)$.
\end{cor}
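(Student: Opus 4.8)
The plan is to combine the main equivalence of Theorem~\ref{thm:MainResult} with the forbidden-minor characterization of median graphs in Proposition~\ref{pro:ForbiddenMinorofMedianGraph}, using the basic algebra of the operator $\mathscr{F}(\cdot)$ recorded in the introduction. Concretely, I would first treat the class of simplex graphs as a subclass of partial cubes and argue that it is pc-minor-closed: by the equivalence (1)$\Leftrightarrow$(2) of Theorem~\ref{thm:MainResult}, a median graph is a simplex graph precisely when it is a daisy cube, and both the class $\mathscr{M}$ of median graphs and the class of daisy cubes are pc-minor-closed (cited in the excerpt from \cite{hik11} and \cite{km19,t20}); an intersection of two pc-minor-closed classes is pc-minor-closed, so the simplex graphs form a pc-minor-closed class $\mathscr{S}$. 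Hence there is a set $X$ of partial cubes with $\mathscr{S}=\mathscr{F}(X)$.

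Next I would pin down $X$ explicitly. By (1)$\Leftrightarrow$(3) of Theorem~\ref{thm:MainResult}, a median graph $G$ is a simplex graph iff $G\in\mathscr{F}(P_4)$, i.e. $\mathscr{S}=\mathscr{M}\cap\mathscr{F}(P_4)$. Now substitute $\mathscr{M}=\mathscr{F}(Q_3^-,C_6)$ from Proposition~\ref{pro:ForbiddenMinorofMedianGraph} to get
\begin{align*}
\mathscr{S}=\mathscr{F}(Q_3^-,C_6)\cap\mathscr{F}(P_4).
\end{align*}
Finally apply the identity $\mathscr{F}(X)\cap\mathscr{F}(Y)=\mathscr{F}(X\cup Y)$ stated in the excerpt (with $X=\{Q_3^-,C_6\}$ and $Y=\{P_4\}$) to conclude $\mathscr{S}=\mathscr{F}(P_4,Q_3^-,C_6)$. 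Unwinding the definition of $\mathscr{S}$: a partial cube $G$ is a simplex graph iff $G\in\mathscr{F}(P_4,Q_3^-,C_6)$, which is exactly the statement of Corollary~\ref{cor:ForbiddenMinorofSimplexGraph}.

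For the converse direction there is essentially nothing extra to check, since all the set-theoretic manipulations above are equalities of classes; but to be safe I would spell out both inclusions. If $G$ is a simplex graph, then $G$ is a median graph by Proposition~\ref{pro:SimplexGraphvsMedianGraph} (hence a partial cube), so $G\in\mathscr{M}=\mathscr{F}(Q_3^-,C_6)$, and $G\in\mathscr{F}(P_4)$ by (1)$\Rightarrow$(3); thus $G\in\mathscr{F}(P_4,Q_3^-,C_6)$. Conversely, if $G$ is a partial cube lying in $\mathscr{F}(P_4,Q_3^-,C_6)=\mathscr{F}(Q_3^-,C_6)\cap\mathscr{F}(P_4)$, then $G\in\mathscr{F}(Q_3^-,C_6)=\mathscr{M}$ is a median graph, and since $G\in\mathscr{F}(P_4)$ as well, the equivalence (3)$\Rightarrow$(1) of Theorem~\ref{thm:MainResult} gives that $G$ is a simplex graph.

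I do not anticipate a genuine obstacle here: the corollary is a formal consequence of the main theorem plus a known result, and the only mild point worth being careful about is that $\mathscr{F}(P_4)$ is taken within the class of \emph{all} partial cubes (so that the identity $\mathscr{F}(X)\cap\mathscr{F}(Y)=\mathscr{F}(X\cup Y)$ applies verbatim), rather than within median graphs; once the ambient class is fixed consistently, the argument is a one-line computation. If anything, the ``hard part'' is purely expository — making sure the reader sees that statement (3) of Theorem~\ref{thm:MainResult}, phrased for median graphs, upgrades to a statement about partial cubes precisely because the extra forbidden minors $Q_3^-$ and $C_6$ cut the partial cubes down to the median graphs.
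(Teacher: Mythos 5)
Your argument is correct and is exactly the derivation the paper intends: the corollary is stated as an immediate consequence of Theorem~\ref{thm:MainResult} (equivalence of (i) and (iii)) together with Proposition~\ref{pro:ForbiddenMinorofMedianGraph} and the identity $\mathscr{F}(X)\cap\mathscr{F}(Y)=\mathscr{F}(X\cup Y)$, with Proposition~\ref{pro:SimplexGraphvsMedianGraph} supplying that simplex graphs are median graphs. No gap; the paper gives no further detail than what you have written.
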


Let $X$ be a set and $\Delta$ a family of subsets of $X$. $\Delta$ is called an {\em abstract simplicial complex} (abbreviated as ASC, also called an {\em independence system} in the context of matroids and greedoids) of $X$ if for any $S\in\Delta$ and $S'\subseteq S$, it holds that $S'\in\Delta$. The elements of an ASC $\Delta$ are called the {\em faces} (or {\em simplices}). The maximal faces, i.e., those faces in $\Delta$ that are not proper subsets of other faces, are called {\em facets} of $\Delta$. For example, let $\Delta$ be the family of cliques (resp. independent sets) of a finite graph $G$. Then $\Delta$ is an ASC of $V(G)$, called the {\em clique complex} (resp. {\em independence complex}) of $G$. It is clear that the clique complex of $G$ is equivalent to the independence complex of the complement of $G$. 

For any three subsets $A, B, C\subseteq X$, denote
\begin{equation*}
M(A, B, C):=(A\cap B)\cup (A\cap C)\cup (B\cap C).
\end{equation*}
Let $\Delta$ be a family of subsets of $X$. We say that $\Delta$ satisfies the {\em Median Property} if
\begin{equation}
\text{for any }D_1, D_2, D_3\in\Delta\text{, }M(D_1, D_2, D_3)\in\Delta.\tag{Median Property}
\end{equation}

Using the equivalence between Theorem \ref{thm:MainResult} (i) and (ii), we establish the following:

\begin{thm}\label{thm:CliqueComplex}
Let $X$ be a finite set and $\Delta$ an ASC of $X$. $\Delta$ can be represented as a clique complex (or equivalently, an independence complex) of a graph if and only if it satisfies the Median Property.
\end{thm}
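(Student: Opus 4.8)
The two implications have quite different flavours, so the plan is to treat them separately.

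\emph{Clique complex $\Rightarrow$ Median Property.} This is the elementary direction and I would argue it by hand. Suppose $\Delta$ is the clique complex of a graph $H$ with $V(H)=X$, and let $D_1,D_2,D_3\in\Delta$ be three cliques. For any two distinct $u,v\in M(D_1,D_2,D_3)$, each of $u$ and $v$ lies in at least two of $D_1,D_2,D_3$, and since any two $2$-element subsets of $\{1,2,3\}$ intersect, some $D_i$ contains both; thus $uv\in E(H)$, so $M(D_1,D_2,D_3)$ is a clique, i.e.\ a face of $\Delta$. (In the language of the paper: identifying a face $S\subseteq X$ with its characteristic string $\chi_S\in B^{|X|}$, the vertex set of $S(H)$ in its canonical embedding into $Q_{|X|}$ is exactly $\{\chi_S:S\in\Delta\}$; by Proposition~\ref{pro:SimplexGraphvsMedianGraph} this is a median graph, the median of three cube vertices is their coordinatewise majority, and the coordinatewise majority of $\chi_{D_1},\chi_{D_2},\chi_{D_3}$ is $\chi_{M(D_1,D_2,D_3)}$, giving the same conclusion.)

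\emph{Median Property $\Rightarrow$ clique complex.} Here Theorem~\ref{thm:MainResult} does the work. First discard any element of $X$ that lies in no face of $\Delta$ (this changes neither hypothesis nor conclusion); put $n=|X|$, identify each $S\in\Delta$ with $\chi_S\in B^n$, and let $\widehat\Delta\subseteq B^n$ be the resulting set. As $\Delta$ is an ASC, $\widehat\Delta$ is downwards closed, so the subgraph $G$ of $Q_n$ it induces is the daisy cube $Q_n(\widehat\Delta)$. Daisy cubes are isometric subgraphs of $Q_n$ --- a shortest $\chi_S$--$\chi_T$ path may be routed through $\chi_{S\cap T}$ --- hence partial cubes, so distances in $G$ are Hamming distances. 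Next I would verify that $G$ is a \emph{median} graph: the unique median in $Q_n$ of three vertices $\chi_A,\chi_B,\chi_C$ is their coordinatewise majority $m=\chi_{M(A,B,C)}$, which lies in $V(G)$ by the Median Property, and since $G$ is isometric in $Q_n$ this $m$ is the (necessarily unique) median of the three vertices in $G$. So $G$ is a finite median graph that is also a daisy cube, and the implication (ii)$\Rightarrow$(i) of Theorem~\ref{thm:MainResult} yields that $G$ is a simplex graph: $G\cong S(H)$ for some graph $H$.

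The last step --- descending from this graph isomorphism to the labelled statement that $\Delta$ itself is a clique complex --- is, I expect, the main obstacle, since an abstract isomorphism $S(H)\cong G$ need not respect ground sets. The $\Theta$-classes of $G=Q_n(\widehat\Delta)$ are precisely its $n$ coordinates, i.e.\ the elements of $X$, so the plan is to identify $V(H)$ with the set of $\Theta$-classes of $G$ through the isomorphism; if, as I anticipate, the proof of (ii)$\Rightarrow$(i) produces $H$ together with a base-point-preserving isomorphism (empty clique $\mapsto\mathbf 0$), then each clique $C$ of $H$ is sent to $\chi_C$, whence $\widehat\Delta=\{\chi_C:C\text{ a clique of }H\}$ and $\Delta$ is the clique complex of $H$. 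Failing that, a short self-contained substitute avoids the base-point issue entirely: let $H'$ be the graph on $X$ with $ij\in E(H')\iff\{i,j\}\in\Delta$; every face of $\Delta$ is trivially a clique of $H'$, and conversely every clique $S=\{i_1,\dots,i_k\}$ of $H'$ belongs to $\Delta$ by induction on $k$, since $S\setminus\{i_1\},S\setminus\{i_2\},S\setminus\{i_3\}$ are smaller cliques of $H'$, hence faces by the inductive hypothesis, while $M(S\setminus\{i_1\},S\setminus\{i_2\},S\setminus\{i_3\})=S$, so the Median Property forces $S\in\Delta$. Either way $\Delta$ is a clique complex, completing the argument.
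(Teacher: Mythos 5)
Your proposal is correct, and its backbone --- encode faces as binary strings, observe that the induced subgraph of $Q_n$ is a daisy cube (by the ASC property) and a median graph (by the Median Property), then invoke Theorem~\ref{thm:MainResult} (i)$\Leftrightarrow$(ii) --- is exactly the paper's route; the paper compresses all of this into three sentences, citing Birkhoff--Kiss for the median-graph/Median-Property correspondence that you verify by hand via coordinatewise majorities. You go beyond the paper in two respects. First, you rightly flag that ``$G\cong S(H)$ for some $H$'' is an unlabelled statement and does not by itself say that $\Delta$ is a clique complex; the paper passes over this silently (its proof of (v)$\Rightarrow$(i) does produce a basepoint-preserving isomorphism sending the empty clique to $0^n$, which justifies the step, but this is never made explicit in the proof of Theorem~\ref{thm:CliqueComplex}). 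Second, your ``self-contained substitute'' --- define $H'$ on $X$ by $ij\in E(H')\iff\{i,j\}\in\Delta$ and show by induction, using $M(S\setminus\{i_1\},S\setminus\{i_2\},S\setminus\{i_3\})=S$, that every clique of $H'$ is a face --- is a complete elementary proof of the implication ``Median Property $\Rightarrow$ clique complex'' that bypasses Theorem~\ref{thm:MainResult} entirely; combined with your pigeonhole argument for the converse it proves the whole theorem with no partial-cube machinery at all. That is a genuinely different, and arguably cleaner, argument; its only cost is that it no longer exhibits the theorem as a corollary of the daisy-cube characterization of simplex graphs, which is the connection the paper wants to showcase.
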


\begin{proof}
Without loss of generality (W.l.o.g.), assume $X=[n]:=\{1,2,\cdots,n\}$. It is known that there exists a natural bijection $\pi$ from $B^n$ to $2^{[n]}$ ( the {\em power set} of $[n]$, which includes all subsets of $[n]$, including the empty set and $[n]$ itself): for any $v=a_1a_2\cdots a_n$, $a_i=1\Longleftrightarrow i\in\pi(v)$ ($1\leq i\leq n$). By the definitions, $\pi$ maps the vertex set of a daisy cube to an ASC of $[n]$, and the vertex set of a median graph to a family of subsets of $[n]$ satisfying the Median Property (see \cite{bk47}). By Theorem \ref{thm:MainResult} (i) and (ii), the theorem follows.
\end{proof}

Additionally, we say an ASC $\Delta$ satisfies the {\em Weak Median Property} if
\begin{equation}
\text{for any facets }D_1, D_2, D_3\in\Delta\text{, }M(D_1, D_2, D_3)\text{ is a face of }\Delta.\tag{Weak Median Property}
\end{equation}

In fact, for an ASC of a finite set, the Weak Median Property and the Median Property are equivalent.
\begin{pro}\label{pro:MPvsWMP}
For an ASC $\Delta$ of a finite set $X$, the Weak Median Property $\Longleftrightarrow$ the Median Property.
\end{pro}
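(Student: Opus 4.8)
The direction (Median Property $\Longrightarrow$ Weak Median Property) is immediate, since facets are in particular faces, and $M(D_1,D_2,D_3)$ being a face is weaker than (indeed implied by) $M(D_1,D_2,D_3)\in\Delta$. So the work is entirely in the converse: assuming every triple of facets has its median contained in some face (equivalently in $\Delta$, since $\Delta$ is downward closed), I want to show every triple $D_1,D_2,D_3$ of arbitrary faces satisfies $M(D_1,D_2,D_3)\in\Delta$.

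The plan is to reduce the general case to the facet case by ``inflating'' each face to a facet containing it. Given faces $D_1,D_2,D_3\in\Delta$, choose for each $i$ a facet $F_i\supseteq D_i$ (possible since $X$ is finite, so every face extends to a maximal one). By the Weak Median Property, $M(F_1,F_2,F_3)\in\Delta$. The key point is then the monotonicity/containment relationship between $M(D_1,D_2,D_3)$ and $M(F_1,F_2,F_3)$: since $A\mapsto M(A,B,C)$ is monotone in each argument under inclusion, $D_i\subseteq F_i$ for all $i$ gives $M(D_1,D_2,D_3)\subseteq M(F_1,F_2,F_3)$. Because $M(F_1,F_2,F_3)\in\Delta$ and $\Delta$ is an ASC (downward closed), it follows that $M(D_1,D_2,D_3)\in\Delta$, as desired.

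The only step requiring a line of verification is the monotonicity of $M$: if $A\subseteq A'$, then $(A\cap B)\cup(A\cap C)\cup(B\cap C)\subseteq(A'\cap B)\cup(A'\cap C)\cup(B\cap C)$, which is clear termwise, and by symmetry $M$ is monotone in all three slots, so $D_i\subseteq F_i$ ($i=1,2,3$) yields the needed inclusion by composing three one-step enlargements. I do not anticipate a genuine obstacle here; the finiteness of $X$ is used only to guarantee that facets exist above every face (an infinite independence system could fail this, which is presumably why the hypothesis is stated for finite $X$). One should also note the degenerate cases where some $D_i$ is already a facet, or where $D_1,D_2,D_3$ are not distinct, but these are automatically covered by the same argument since nothing above required distinctness.
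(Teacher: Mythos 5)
Your proposal is correct and follows essentially the same route as the paper's own proof: extend each face $D_i$ to a facet $E_i$ (using finiteness of $X$), observe $M(D_1,D_2,D_3)\subseteq M(E_1,E_2,E_3)$ by monotonicity of $M$, and conclude by the Weak Median Property together with downward closure of $\Delta$. The extra remarks on degenerate cases are harmless but not needed.
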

\begin{proof} The implication `Median Property $\Longrightarrow$ Weak Median Property' is trivial. Now we prove `Weak Median Property $\Longrightarrow$ Median Property'.

For any $D_1, D_2, D_3\in\Delta$ (i.e., $D_1, D_2, D_3$ are faces of $\Delta$), let $E_i$ ($i=1,2,3$) be a facet of $\Delta$ such that $D_i\subseteq E_i$ ($E_i$ exists since $X$ is finite). It can be deduced that $M(D_1, D_2, D_3)\subseteq M(E_1, E_2, E_3)$. By the Weak Median Property, $M(E_1, E_2, E_3)$ is a face of $\Delta$. Since $\Delta$ is an ASC, $M(D_1, D_2, D_3)$ is also a face of $\Delta$. Thus,  the Median Property holds.
\end{proof}

\begin{rem}
Recently, Betre et al.\ \cite{bze24} proved that an ASC of a finite set can be represented as a clique complex of a graph if and only if it satisfies the Weak Median Property. Their proof utilized the equivalence between flag complexes and clique complexes. Combined with Proposition \ref{pro:MPvsWMP}, Theorem \ref{thm:CliqueComplex} re-establishes this result by using a graph-theoretical proof.
\end{rem}

We will introduce some terminology and propositions in the next section and prove Theorem \ref{thm:MainResult} in Section 3. Finally, we will conclude the paper and propose some future problems in Section 4.

\section{Preliminaries}

Throughout this paper, except where stated otherwise, all graphs we considered are undirected, finite, and simple. Let $G$ be a graph with vertex set $V(G)$  and edge set $E(G)$. For $u, v\in V(G)$, a shortest path from $u$ to $v$ is called a $u,v$-{\em geodesic}. For the geodesics of partial cubes, we have the following propositions:

\begin{pro}{\em\cite{ik00}}\label{pro:DWRelationonGeodesic}
Let $G$ be a partial cube containing a path $P$. $P$ is a geodesic in $G$ if and only if no two distinct edges on $P$ are in relation $\Theta$. In particular, two adjacent edges in a partial cube can't be in relation $\Theta$.
\end{pro}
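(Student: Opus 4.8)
The plan is to work inside a fixed isometric embedding $V(G)\subseteq B^{n}$ and to use two facts recalled in the excerpt: for a partial cube the graph metric $d_{G}$ equals the Hamming distance on $B^{n}$, and each $\Theta$-class of $G$ consists exactly of the edges that flip one fixed coordinate $i\in[n]$; in particular, two edges of $G$ are in relation $\Theta$ if and only if they change the same coordinate. Since $G$ is an isometric subgraph of $Q_{n}$, the endpoints of any edge of $G$ are at distance $1$, hence differ in exactly one coordinate, which I will call the \emph{coordinate of that edge}.

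First I would set up the coordinate bookkeeping along the path. Write $P=v_{0}v_{1}\cdots v_{k}$ and let $c_{j}\in[n]$ be the coordinate of the edge $v_{j-1}v_{j}$ for $1\le j\le k$. Reading the strings from $v_{0}$ to $v_{k}$, the $i$-th coordinate of $v_{k}$ differs from that of $v_{0}$ precisely when $i$ occurs an odd number of times in the sequence $c_{1},\dots,c_{k}$ (flipping a bit twice restores it). Therefore
\[
d_{G}(v_{0},v_{k})=\bigl|\{\,i\in[n]:\ i\text{ occurs an odd number of times in }c_{1},\dots,c_{k}\,\}\bigr|.
\]

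Next I would invoke an elementary counting fact: the number of values occurring an odd number of times in $c_{1},\dots,c_{k}$ is at most the number of \emph{distinct} values occurring in $c_{1},\dots,c_{k}$, which is at most $k$; moreover equality holds throughout if and only if $c_{1},\dots,c_{k}$ are pairwise distinct. Combining this with the displayed identity and with the fact that $P$ has length $k$, we obtain: $P$ is a geodesic, i.e.\ $d_{G}(v_{0},v_{k})=k$, if and only if $c_{1},\dots,c_{k}$ are pairwise distinct, if and only if no two edges of $P$ change the same coordinate, if and only if no two distinct edges of $P$ are in relation $\Theta$. This is the asserted equivalence. For the ``in particular'' clause, suppose $uv$ and $vw$ are distinct adjacent edges with $uv\,\Theta\,vw$; applying the bipartite characterization of $\Theta$ to these two edges (which share the vertex $v$) yields $d(u,w)=d(v,v)=0$, so $u=w$, a contradiction.

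I do not anticipate a real obstacle. The only points requiring care are the parity bookkeeping in the displayed distance formula and stating the counting step with its exact equality condition, since that equality condition is precisely what delivers the ``only if'' direction of the equivalence; everything else reduces to the structural facts about partial cubes already quoted in the excerpt. (If one preferred a proof not routed through the coordinate embedding, one would instead use that every $\Theta$-class is an edge cut and that halfspaces are convex, but the coordinate route is more direct given what is available here.)
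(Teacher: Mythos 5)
The paper does not prove this proposition; it is quoted verbatim from Imrich--Klav\v zar \cite{ik00}, so there is no in-paper argument to compare against. Your proof is correct. The coordinate bookkeeping is sound: under an isometric embedding each edge flips exactly one coordinate, $d_G(v_0,v_k)$ equals the number of coordinates flipped an odd number of times along $P$, and your counting step (odd-occurring values $\le$ distinct values $\le k$, with equality throughout iff the $c_j$ are pairwise distinct) correctly delivers both directions of the equivalence once one knows that two edges are $\Theta$-related iff they flip the same coordinate --- a fact the paper explicitly records from \cite{d73} and which is established by a direct Hamming-distance computation, so there is no circularity. Your separate argument for the ``in particular'' clause via the bipartite form of $\Theta$ (forcing $d(u,w)=d(v,v)=0$) is also fine, though it already follows from the main equivalence since any path of length $2$ in a bipartite graph is a geodesic. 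For comparison, the standard treatment in \cite{ik00} proves the forward implication (geodesics carry no two $\Theta$-related edges) for \emph{arbitrary} connected graphs directly from the definition of $\Theta$, reserving the embedding only for the converse; your route trades that extra generality for a single uniform argument, which is perfectly adequate here since the proposition is stated only for partial cubes.
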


\begin{pro}{\em\cite{xfx23}}\label{pro:DWRelationonTwoGeodesic}
Let $G$ be a partial cube, $u,v$ two vertices with $d(u,v)\geqslant 2$, and $P$, $P'$ two $u,v$-geodesics. Then, for every edge $e$ on $P$, there exists exactly one edge $e'$ on $P'$ such that $e'\,\Theta\, e$.
\end{pro}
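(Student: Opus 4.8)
The plan is to exploit the standard correspondence between the $\Theta$-classes of a partial cube and the coordinates of an isometric embedding into a hypercube, together with Proposition~\ref{pro:DWRelationonGeodesic}. First I would isometrically embed $G$ into $Q_n$, so that $V(G)\subseteq B^n$ and the $\Theta$-classes are $\theta_1,\dots,\theta_n$, where $\theta_i$ consists of the edges flipping the $i$-th coordinate. Writing $u=a_1\cdots a_n$ and $v=b_1\cdots b_n$, set $S:=\{i : a_i\neq b_i\}$, so that $|S|=d(u,v)\geq 2$.

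The key step is the claim that any $u,v$-geodesic $R$ meets exactly the $\Theta$-classes $\{\theta_i : i\in S\}$, each exactly once. Indeed, $R$ has length $|S|$, and traversing an edge of $\theta_i$ flips the $i$-th coordinate; since the net effect of $R$ is to transform $u$ into $v$, each coordinate in $S$ must be flipped an odd number of times and each coordinate outside $S$ an even number of times. On the other hand, by Proposition~\ref{pro:DWRelationonGeodesic} no two edges of the geodesic $R$ lie in the same $\Theta$-class, so each $\theta_i$ is used at most once along $R$. Combining the two observations forces each $\theta_i$ with $i\in S$ to be used exactly once, and no other $\theta_i$ to be used at all.

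Applying this to both $P$ and $P'$: given an edge $e$ on $P$, we have $e\in\theta_i$ for a unique $i\in S$, and by the claim there is exactly one edge $e'$ on $P'$ with $e'\in\theta_i$; since two edges of a partial cube lie in relation $\Theta$ precisely when they belong to the same $\Theta$-class, this $e'$ is the unique edge on $P'$ with $e'\,\Theta\,e$, which is the assertion of the proposition.

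I do not anticipate a serious obstacle; the only point needing a little care is the claim in the second paragraph. If one prefers to avoid invoking a fixed embedding, one can argue intrinsically: for an edge $e=xy$ on a $u,v$-geodesic with $x$ preceding $y$, the inclusions $u\in W_{xy}$ and $v\in W_{yx}$ (immediate from the distances along the geodesic) show that the $\Theta$-class of $e$ separates $u$ from $v$; conversely, a $\Theta$-class crossed by some $u,v$-geodesic must separate $u$ from $v$. Hence the set of $\Theta$-classes met along any $u,v$-geodesic equals the set of $\Theta$-classes separating $u$ from $v$, each with multiplicity one, and the proposition follows as above.
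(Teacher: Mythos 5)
Your proof is correct. Note that the paper does not prove this proposition at all---it is quoted from the reference [Xie, Feng, Xu, Discrete Math.\ 346 (2023)]---so there is no in-paper argument to compare against; your coordinate-counting argument (each $u,v$-geodesic flips exactly the coordinates where $u$ and $v$ differ, each exactly once, by combining the parity count with Proposition~\ref{pro:DWRelationonGeodesic}) is the standard and complete way to establish it, and the intrinsic variant via the separating halfspaces $W_{xy}$, $W_{yx}$ that you sketch at the end is equally valid.
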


Let $G$ be a partial cube. % with $\mathrm{idim}(G)=n$ and $\theta_1,\theta_2,\cdots,\theta_n$ its all $\Theta$-classes, where $\theta_i$ is corresponding to the $i$-th digit of binary strings, i.e., $\theta_i=\{uv\in E(G)|u=a_1\cdots a_{i-1}0a_{i+1}\cdots a_n, v=a_1\cdots a_{i-1}1a_{i+1}\cdots a_n\}$ for $i\in [n]$.
When we contract $G$ with respect to a $\Theta$-class $\theta_i$, the edges in $\theta_i$ are contracted (shortly, we say that $\theta_i$ is {\em contracted}). When we restrict $G$ with respect to a $\theta_i$, all edges are deleted except the ones whose both two endvertices are in the corresponding halfspace. For the proper pc-minors of partial cubes, we have

\begin{obs}\label{obv:ThetaClass}
Let $G$ be a partial cube with $e, f\in E(G)$ and $H$ its proper pc-minor. If $e$ and $f$ are neither contracted nor deleted in the contraction-restriction procedure from $G$ to $H$,  and $e', f'$ are their corresponding edges in $H$, respectively, then $e\,\Theta_G\,f\Longleftrightarrow e'\,\Theta_{H}\,f'$.
\end{obs}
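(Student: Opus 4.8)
The plan is to reduce the statement to the case of a \emph{single} elementary operation and then induct on the number of operations in the contraction--restriction procedure from $G$ to $H$: if $G \to G_1 \to \cdots \to G_k = H$ and an edge survives each step, then its image in $H$ is obtained by chasing it through the intermediate graphs, so the ``corresponding edge'' relation composes and it suffices to treat one step at a time. Concretely, the claim to prove is: if $H$ is obtained from a partial cube $G$ by a single restriction $\rho_i^\epsilon$ or a single contraction $G/\theta_i$, and $e,f\in E(G)$ are neither deleted nor contracted, with images $e',f'\in E(H)$, then $e\,\Theta_G\,f\iff e'\,\Theta_H\,f'$.

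I would carry this out in the coordinate model. Fix an isometric embedding $V(G)\subseteq B^n$ with $n=\mathrm{idim}(G)$; then the $\Theta_G$-classes are in bijection with the coordinates $1,\dots,n$, and two edges are $\Theta_G$-related exactly when they flip the same coordinate (this is the coordinate description of $\Theta$ recalled after \cite{d73}). Say an edge $ab$ with $a,b\in B^n$ \emph{flips coordinate $j$} if $a$ and $b$ differ precisely in position $j$. The whole observation then amounts to: a surviving edge keeps its flip-coordinate. For a restriction $\rho_i^\epsilon(G)$, which is the subgraph induced by the halfspace $\{a:a_i=\epsilon\}$, a surviving edge $e=ab$ flipping coordinate $j$ has both endpoints in that halfspace, so it is still an edge of $\rho_i^\epsilon(G)$ flipping coordinate $j$ (and automatically $j\neq i$, since $e$ is not deleted); hence the flip-coordinate, and so the $\Theta$-relation among surviving edges, is unchanged. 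For a contraction $G/\theta_i$, realized on coordinates by deleting position $i$ from every vertex and identifying duplicates, a surviving edge $e=ab$ flips some coordinate $j\neq i$, so the images $\pi(a),\pi(b)$ still differ precisely in position $j$; in particular $\pi(a)\neq\pi(b)$, so $e'=\pi(a)\pi(b)$ really is an edge of $G/\theta_i$, and it flips coordinate $j$. Since $G/\theta_i$ is again a partial cube whose $\Theta$-classes correspond to the surviving coordinates (\cite{ckm20,m18}), we again get $e\,\Theta_G\,f\iff e'\,\Theta_{G/\theta_i}\,f'$, in both directions at once.

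The one point needing a little care is the contraction step, where two distinct edges $e\neq f$ of $G$ may be identified to a single edge $e'=f'$ of $G/\theta_i$; this occurs exactly when the endpoints of $e$ and $f$ differ pairwise only in coordinate $i$, which forces $e$ and $f$ to flip the same coordinate, so $e\,\Theta_G\,f$ holds, while $e'=f'$ is trivially $\Theta_H$-related to itself ($\Theta$ is reflexive since $d(u,u)+d(v,v)=0\neq 2=d(u,v)+d(v,u)$), so the biconditional survives this degenerate case too. Alternatively one can bypass coordinates: halfspaces of a partial cube are convex, hence $\rho_i^\epsilon(G)$ is an isometric subgraph and distances --- and therefore the bipartite form of $\Theta$ --- are literally preserved on surviving vertices; and for a contraction one uses the fact that $d_{G/\theta_i}(\pi(a),\pi(b))$ equals $d_G(a,b)$ or $d_G(a,b)-1$ according as $a,b$ lie in the same or in different halfspaces of $\theta_i$, and then checks the four distance equalities defining $\Theta$ by a short case analysis on which halfspaces the endpoints of $e$ and $f$ occupy. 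I expect the bookkeeping around this edge-identification phenomenon, rather than any real difficulty, to be the only subtle part of the proof.
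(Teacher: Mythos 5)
The paper states this as an \emph{Observation} and supplies no proof at all, so there is nothing to diverge from: your argument---reducing to a single elementary restriction or contraction and tracking the flip-coordinate of each surviving edge in the coordinate model, where two edges are $\Theta$-related exactly when they flip the same coordinate---is correct and is precisely the justification implicit in the paper's preceding setup of $\Theta$-classes as coordinates. Your handling of the degenerate case in which a contraction identifies two (necessarily $\Theta_G$-related) edges is a detail the paper glosses over entirely, and it is right.
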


Let $H$ be a subgraph of a graph $G$. $H$ is called a {\em convex subgraph} of $G$ if, for any $u,v\in V(H)$, all $u,v$-geodesics are contained in $H$. It is obvious that the convex subgraphs are isometric. Denote $\partial\, H=\{uv\in E(G)|u\in V(H),v\not\in V(H)\}$. Regarding convex subgraphs of bipartite graphs, we have
\begin{pro}{\em\cite{ik98}}\label{pro:ConvexityLemma}
An induced connected subgraph $H$ of a bipartite graph $G$ is convex if and only if no edge of $\partial\, H$ is in relation $\Theta$ to an edge in $H$.
\end{pro}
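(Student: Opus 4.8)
The plan is to prove both implications by contraposition, organizing everything around one elementary fact about bipartite graphs, which I will call the \emph{crossing fact}: for an edge $ab$ of a (connected) bipartite graph $G$, every edge $xy$ with one endpoint in $W_{ab}$ and the other in $W_{ba}$ satisfies $xy\,\Theta\,ab$; equivalently, in a bipartite graph $F_{ab}$ consists precisely of edges $\Theta$-related to $ab$ (even though $\Theta$ need not be transitive). I would establish this first: since $a\sim b$, for any vertex $z$ the two values $d(z,a),d(z,b)$ differ by exactly $1$, so $x\in W_{ab}$ forces $d(x,b)=d(x,a)+1$ and $y\in W_{ba}$ forces $d(y,a)=d(y,b)+1$; setting $\alpha=d(x,a)$ and $\beta=d(y,b)$ and using $x\sim y$ (so $d(x,a)-d(y,a)$ and $d(x,b)-d(y,b)$ each have absolute value $1$), a short computation pins down $\alpha=\beta$, i.e. $d(a,x)=d(b,y)$ and $d(a,y)=d(b,x)$, which is the bipartite definition of $ab\,\Theta\,xy$.

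For the implication ``$H$ convex $\Rightarrow$ no edge of $\partial\, H$ is $\Theta$-related to an edge in $H$'' I would argue the contrapositive. Suppose $ab\in\partial\, H$ with $a\in V(H)$, $b\notin V(H)$, and $ab\,\Theta\,cd$ for some $cd\in E(H)$. From $d(a,c)=d(b,d)$ and $d(a,d)=d(b,c)$, and relabelling so that $d(a,c)<d(a,d)$ (the two are unequal since $c\sim d$ and $G$ is bipartite), one gets $d(a,d)=d(b,d)+1$; since $a\sim b$ this means some shortest $a,d$-geodesic passes through $b$. But $a,d\in V(H)$ while $b\notin V(H)$, so $H$ is not convex.

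For ``no edge of $\partial\, H$ is $\Theta$-related to an edge in $H$ $\Rightarrow$ $H$ convex'' I would again use the contrapositive, now combined with the crossing fact. If $H$ is not convex, pick $u,v\in V(H)$ and a $u,v$-geodesic $P=w_0w_1\cdots w_d$ ($w_0=u$, $w_d=v$) not lying in $H$, and let $i$ be least with $w_i\notin V(H)$; then $1\le i\le d-1$ and $ab:=w_{i-1}w_i\in\partial\, H$. Reading off distances along $P$ gives $d(u,w_{i-1})<d(u,w_i)$ and $d(v,w_i)<d(v,w_{i-1})$, i.e. $u\in W_{ab}$ and $v\in W_{ba}$. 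Since $W_{ab}$ and $W_{ba}$ partition $V(G)$ (again because $G$ is bipartite and $a\sim b$) and $H$ is connected, any $u,v$-path inside $H$ contains an edge $xy$ with $x\in W_{ab}$, $y\in W_{ba}$, and this edge lies in $E(H)$. By the crossing fact $xy\,\Theta\,ab$, so the edge $ab\in\partial\, H$ is $\Theta$-related to the edge $xy\in E(H)$, as required.

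The main obstacle is essentially just getting the crossing fact right: it is the one place bipartiteness is used in an essential way, and although the parity bookkeeping is short it must be done carefully (in particular it hinges on $a\sim b$ and $x\sim y$ each forcing a distance change of exactly one). Once it is in hand, both directions are immediate. I would also note that this route deliberately avoids the more traditional minimal-counterexample induction over pairs $u,v\in V(H)$: that argument is somewhat delicate, since one must separately dispose of the case in which every neighbour of $u$ inside $H$ is \emph{farther} from $v$ than $u$ is, whereas the halfspace/crossing argument sidesteps the issue entirely.
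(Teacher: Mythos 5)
Your argument is correct. Note first that the paper itself offers no proof of this proposition --- it is quoted verbatim from Imrich and Klav\v zar \cite{ik98} --- so there is no in-paper argument to compare against; I can only assess your proof on its own and against the standard one in the cited source. All three of your ingredients check out. The ``crossing fact'' is sound: in a connected bipartite graph adjacent vertices have distances to any fixed vertex differing by exactly one (equality is excluded by parity), so $W_{ab}$ and $W_{ba}$ partition $V(G)$, and your bookkeeping with $\alpha=d(x,a)$, $\beta=d(y,b)$ correctly forces $\alpha=\beta$ and hence $ab\,\Theta\,xy$. The forward contrapositive correctly produces an $a,d$-geodesic through $b\notin V(H)$ from $d(a,d)=d(b,d)+1$. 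The reverse contrapositive correctly uses that $H$ is \emph{induced} (so a geodesic can leave $H$ only by visiting a vertex outside $V(H)$), that $H$ is connected (so a $u,v$-path inside $H$ exists and must cross the halfspace partition), and then invokes the crossing fact. Compared with the proof in \cite{ik98}, which for the sufficiency direction runs an induction over $u,v$-paths in $H$ and has to handle the awkward case you allude to, your route through the halfspace partition $W_{ab}\cup W_{ba}=V(G)$ is cleaner and more transparent; the price is that you must first establish the crossing fact, but that is exactly the lemma one wants on hand anyway when working with $\Theta$-classes in bipartite graphs (it is also why $F_{ab}$ is an edge cutset). One cosmetic remark: you should state explicitly that $G$ is connected (as the paper's definition of $\Theta$ presupposes), since otherwise the distances in the crossing fact need not be finite.
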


The following proposition can be deduced from Proposition \ref{pro:ConvexityLemma}.
\begin{pro}{\em\cite{ak16,b89,c86}}\label{pro:ConvexSubgraph}
Let $G$ be a partial cube and $H$ a subgraph of $G$. $H$ is a convex subgraph of $G$ if and only if $H$ is a restriction of $G$.
\end{pro}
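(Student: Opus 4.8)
The plan is to prove the two implications of the equivalence separately; the direction ``$H$ a restriction $\Rightarrow$ $H$ convex'' is essentially formal, while ``$H$ convex $\Rightarrow$ $H$ a restriction'' carries the content. For the first direction, I would recall that a restriction is the subgraph induced by an intersection $\bigcap_k W_{u_kv_k}$ of (non-complementary) halfspaces, that each halfspace $W_{u_kv_k}$ is a convex subset of $V(G)$ (noted above), that an intersection of convex sets is convex (a $u,v$-geodesic with endpoints in the intersection lies in each $W_{u_kv_k}$, hence in the intersection), and that the subgraph induced by a convex vertex set is a convex subgraph. These observations suffice.

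For ``$H$ convex $\Rightarrow$ $H$ a restriction'' I would proceed as follows. We may assume $H\neq\emptyset$; then $H$ is connected (any two of its vertices are joined by a geodesic lying in $H$) and is an induced subgraph of $G$ (an edge joining two vertices of $H$ is a geodesic, hence lies in $H$). Fix an isometric embedding $V(G)\subseteq B^n$ and identify the $\Theta$-classes $\theta_1,\dots,\theta_n$ with the coordinates. Let $I\subseteq[n]$ be the set of coordinates that are constant on $V(H)$, with value $c_i$ for $i\in I$, and set $W^{(i)}=\{a\in V(G)\mid a_i=c_i\}$. A first observation is that for each $i\notin I$ the class $\theta_i$ contains an edge of $H$: coordinate $i$ takes both values on $V(H)$, so, $H$ being connected, some edge of a path in $H$ flips it. The goal is then to establish
\[
V(H)=\bigcap_{i\in I}W^{(i)},
\]
from which it follows that $H=G\big[\bigcap_{i\in I}W^{(i)}\big]$ is a restriction (distinct coordinates give pairwise non-complementary halfspaces; $I=\emptyset$ yields $H=G$).

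The inclusion $\subseteq$ is immediate. For $\supseteq$, I would pick $w\in\bigcap_{i\in I}W^{(i)}$ and choose $h\in V(H)$ at minimum distance from $w$, then show $h=w$. If $h\neq w$, let $hh'$ be the first edge of an $h,w$-geodesic in $G$, lying in some $\theta_j$. Since such a geodesic flips each coordinate at most once (Proposition~\ref{pro:DWRelationonGeodesic}), it flips exactly the coordinates in which $h$ and $w$ differ; as $h\in V(H)$ forces $h_i=c_i=w_i$ for all $i\in I$, we get $j\notin I$, so $\theta_j$ has an edge $e$ in $H$. If $h'\notin V(H)$, then $hh'\in\partial H$ and $hh'\,\Theta\,e$ with $e\in E(H)$, contradicting convexity of $H$ via Proposition~\ref{pro:ConvexityLemma}. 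Hence $h'\in V(H)$, and $d(h',w)=d(h,w)-1$ contradicts the choice of $h$. Therefore $h=w\in V(H)$.

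The main obstacle is precisely this last inclusion: one must argue that the metric condition of convexity forces $V(H)$ to be exactly the intersection of the halfspaces containing it. The decisive move is to use the Convexity Lemma (Proposition~\ref{pro:ConvexityLemma}) to recast ``$H$ is convex'' as the combinatorial statement that no edge of $\partial H$ is $\Theta$-related to an edge of $H$; with convexity in that form, the shortest-path extremality argument above closes the gap routinely.
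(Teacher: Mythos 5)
Your proof is correct, and it follows exactly the route the paper indicates: the paper gives no proof of this proposition (it is cited from the literature, with the remark that it ``can be deduced from Proposition~\ref{pro:ConvexityLemma}''), and your argument carries out precisely that deduction, using the Convexity Lemma to turn convexity into the $\Theta$-condition on $\partial H$ and then identifying $V(H)$ with the intersection of the halfspaces on which its coordinates are constant. The easy direction via convexity of halfspaces and of intersections is also handled correctly.
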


Let $G$ be a graph. For a subset $S$ of vertex set $V(G)$, the subgraph induced by $S$ is denoted by $G[S]$.  For the median graphs, it is known that there are many equivalent characterizations. The most well-known is the following proposition:

\begin{pro}{\em\cite{ik00}}\label{pro:UisConvex}
A graph $G$ is a median graph if and only if $G$ is a partial cube and $G[U_{uv}]$ and $G[U_{vu}]$ are convex in $G$ for every $uv\in E(G)$.
\end{pro}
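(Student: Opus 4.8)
Throughout, the plan is to fix an isometric embedding $V(G)\subseteq B^{n}$ with $n=\mathrm{idim}(G)$ and use the dictionary recalled in the introduction: the $\Theta$-classes are the coordinates $\theta_{1},\dots,\theta_{n}$, the two halfspaces of $\theta_{i}$ are $\{a\mid a_{i}=0\}$ and $\{a\mid a_{i}=1\}$, and, identifying a string with the set of coordinates on which it equals $1$, the vertex set becomes a family $\mathcal{F}\subseteq 2^{[n]}$. In this language I write $U^{i}$ and $U^{i*}$ for the sets $U_{uv},U_{vu}$ attached to $\theta_{i}$ (with $i\notin u$), so that $U^{i}=\{c\in\mathcal{F}\mid i\notin c,\ c\cup\{i\}\in\mathcal{F}\}$, $U^{i*}=\{c\cup\{i\}\mid c\in U^{i}\}$, and $c\mapsto c\cup\{i\}$ is the matching induced by $\theta_{i}$. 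I also record the elementary fact (immediate from the description of geodesics in a partial cube) that a vertex lying on a geodesic between $A$ and $B$ agrees with both on every coordinate where they agree; consequently the only candidate median of a triple $A,B,C$ is their coordinatewise majority $M(A,B,C)=(A\cap B)\cup(A\cap C)\cup(B\cap C)$, and this set, whenever it belongs to $\mathcal{F}$, is automatically the median. Thus $G$ is a median graph if and only if $M(A,B,C)\in\mathcal{F}$ for all $A,B,C\in\mathcal{F}$.

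For the forward implication, assume $G$ is a median graph, hence a partial cube. Fix $i$, take $a,b\in U^{i}$ with $\theta_{i}$-partners $a\cup\{i\},b\cup\{i\}\in\mathcal{F}$, and let $x$ be any vertex on an $a$--$b$ geodesic. Then $a\cap b\subseteq x\subseteq a\cup b$ and in particular $i\notin x$, and a one-line computation gives $M(a\cup\{i\},b\cup\{i\},x)=x\cup\{i\}$. Since $G$ is a median graph this majority set is a vertex, i.e. $x\cup\{i\}\in\mathcal{F}$, so $x\in U^{i}$. Hence every vertex of every $a$--$b$ geodesic lies in $U^{i}$, which is precisely the convexity of $G[U^{i}]=G[U_{uv}]$; the case of $G[U_{vu}]$ is symmetric. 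No real difficulty arises here beyond noticing which triple has $x\cup\{i\}$ as its forced median.

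For the converse, suppose $G$ is a partial cube in which every $G[U_{uv}]$ and $G[U_{vu}]$ is convex, and induct on $|V(G)|$; the case $|V(G)|\le 1$ is trivial. If $n\ge 1$, fix a $\Theta$-class $\theta_{i}$ and let $G_{0}=\rho_{i}^{0}(G)$, $G_{1}=\rho_{i}^{1}(G)$ be its elementary restrictions; both are (isometric, since halfspaces are convex) partial cubes with strictly fewer vertices. Each inherits the hypothesis: a $\Theta$-class of $G_{0}$ is the trace of some $\theta_{j}$ with $j\ne i$, its two $U$-sets are $U^{j}\cap V(G_{0})$ and $U^{j*}\cap V(G_{0})$, and because halfspaces are convex and an intersection of convex subgraphs is convex (Propositions \ref{pro:ConvexityLemma}--\ref{pro:ConvexSubgraph}), $G[U^{j}\cap V(G_{0})]=G[U^{j}]\cap G_{0}$ is convex in $G$, hence in $G_{0}$; the same holds for $G_{1}$. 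By the inductive hypothesis $G_{0}$ and $G_{1}$ are median graphs, so $M(A,B,C)\in\mathcal{F}$ whenever $A,B,C$ all lie in a single halfspace of $\theta_{i}$.

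The remaining case is a triple split by $\theta_{i}$, say $A,B\in V(G_{0})$, $C\in V(G_{1})$, so $i\notin A$, $i\notin B$, $i\in C$, whence $i\notin M(A,B,C)$. I choose an $A$--$C$ geodesic and a $B$--$C$ geodesic; since a geodesic of a partial cube meets each $\Theta$-class at most once (Proposition \ref{pro:DWRelationonGeodesic}), each crosses $\theta_{i}$ in a single edge, whose $G_{1}$-endpoints $A_{1},B_{1}$ then lie in $U^{i*}$, with $A_{1}$ on the first geodesic and $B_{1}$ on the second. Inside the median graph $G_{1}$ form $D:=M(A_{1},B_{1},C)$. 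Because $D$ lies on an $A_{1}$--$B_{1}$ geodesic and $G[U^{i*}]$ is convex (in $G$, hence in $G_{1}$), we get $D\in U^{i*}$, so $D^{\circ}:=D\setminus\{i\}\in U^{i}\subseteq V(G_{0})\cap\mathcal{F}$. One then checks coordinatewise that $M(A,B,D^{\circ})=M(A,B,C)$: both sides equal the common value on any coordinate where $A,B$ agree, while on a coordinate $j\ne i$ with $A_{j}\ne B_{j}$ exactly one of $A,B$ agrees with $C$, so the corresponding crossing vertex ($A_{1}$ or $B_{1}$) — and therefore $D$, and $D^{\circ}$ — agrees with $C$ at $j$. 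Since $G_{0}$ is a median graph, $M(A,B,D^{\circ})\in V(G_{0})\subseteq\mathcal{F}$, hence $M(A,B,C)\in\mathcal{F}$; as medians in partial cubes are automatically unique, $G$ is a median graph. The only genuinely delicate step is this last one — transferring the third argument $C$ across the cut $\theta_{i}$ — and it is the sole place the hypothesis is used: the $\theta_{i}$-crossing points of geodesics toward $C$ land in $U^{i*}$, and the convexity of $G[U^{i*}]$ traps their median $D$ inside $U^{i*}$, so $D$ may be reflected to $D^{\circ}\in U^{i}$ on the other side without disturbing the coordinates that determine $M(A,B,C)$. I would also expect the verification that $G_{0},G_{1}$ inherit the hypothesis, together with the coordinate bookkeeping for $M(A,B,D^{\circ})=M(A,B,C)$, to require some care, though both are routine.
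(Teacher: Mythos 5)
Your argument is correct, but note that the paper offers no proof of this statement to compare against: Proposition \ref{pro:UisConvex} is quoted from Imrich--Klav\v zar \cite{ik00}, where it is usually derived via Mulder's convex expansion procedure (median graphs are exactly the graphs obtainable from $K_1$ by convex expansions). Your route is genuinely different and essentially self-contained: you work with the set-theoretic median $M(A,B,C)$ directly, prove the forward direction by the observation that $M(a\cup\{i\},b\cup\{i\},x)=x\cup\{i\}$ for $x$ on an $a,b$-geodesic inside a halfspace, and prove the converse by induction on the two elementary restrictions of a $\Theta$-class, transferring the third vertex $C$ across the cut by reflecting the median $D=M(A_1,B_1,C)$ (trapped in $U^{i*}$ by convexity) to $D^{\circ}\in U^i$. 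I checked the two delicate steps --- that $G_0,G_1$ inherit the convexity hypothesis (using that halfspaces are convex, $\Theta$-classes restrict coherently by Observation \ref{obv:ThetaClass}, and the $\theta_j$-partner of a vertex of $G_0$ stays in $G_0$ for $j\neq i$), and the coordinatewise identity $M(A,B,D^{\circ})=M(A,B,C)$ --- and both go through. What your approach buys is a direct, labelling-based proof that fits the paper's conventions; what the expansion-based proof buys is independence from any chosen embedding.

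One caveat you should make explicit: your forward direction opens with ``$G$ is a median graph, hence a partial cube.'' That implication is itself a nontrivial theorem (and is in effect part of what the proposition asserts), so your argument proves the forward direction only modulo this standard fact. Since the paper states it as known background with references, this is acceptable in context, but it should be flagged as an imported ingredient rather than something your proof establishes.
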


Let $G$ be a partial cube, $\theta_i,\theta_j$ two $\Theta$-classes of $G$. We say $\theta_i$ and $\theta_j$ {\em cross} if all of $\rho_{i}^0(G)\cap \rho_{j}^0(G)$,  $\rho_{i}^0(G)\cap \rho_{j}^1(G)$,  $\rho_{i}^1(G)\cap \rho_{j}^0(G)$ and  $\rho_{i}^1(G)\cap \rho_{j}^1(G)$ are not empty. Another equivalent expression of crossing relation is as follows:

\begin{pro}{\em\cite{km02}}\label{pro:CrossinIsometricCycle}
Let $G$ be a partial cube, $\theta_i,\theta_j$ two $\Theta$-classes. Then $\theta_i$ and $\theta_j$ cross if and only if they occur on an isometric cycle $C$, i.e., $E(C)\cap\theta_i\neq\emptyset$, $E(C)\cap\theta_j\neq\emptyset$.
\end{pro}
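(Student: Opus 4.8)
The plan is to prove the two implications separately; throughout I would view $G$ as an isometric subgraph of $Q_n$, so that each $\Theta$-class $\theta_l$ corresponds to a coordinate $l$ and the halfspaces $\rho_l^0(G),\rho_l^1(G)$ are the vertices whose $l$-th coordinate is $0$, resp.\ $1$.

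For the implication ``occurs on an isometric cycle $\Longrightarrow$ cross'', let $C=v_0v_1\cdots v_{2m-1}v_0$ be an isometric cycle of $G$ on which both $\theta_i$ and $\theta_j$ occur. I would first record the standard fact (a short computation using $d_C=d_G$ on $C$ together with the bipartite form of $\Theta$) that $C$ meets exactly $m$ distinct $\Theta$-classes, each on precisely two antipodal edges $v_av_{a+1}$ and $v_{a+m}v_{a+m+1}$. Hence we may assume $\theta_i=\{v_0v_1,\,v_mv_{m+1}\}$ and $\theta_j=\{v_av_{a+1},\,v_{a+m}v_{a+m+1}\}$ for some $1\le a\le m-1$. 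Walking around $C$, the $i$-th coordinate is constant on the arc $v_1,\dots,v_m$ and takes the opposite value on $v_{m+1},\dots,v_{2m}$, and likewise the $j$-th coordinate is constant on $v_{a+1},\dots,v_{a+m}$ and opposite elsewhere. Reading off the $(i,j)$-coordinate pair of the four vertices $v_1,v_{a+1},v_{m+1},v_{a+m+1}$ then exhibits all four combinations of $i,j$-coordinates among vertices of $C\subseteq V(G)$; that is, all four of $\rho_i^p(G)\cap\rho_j^q(G)$ are nonempty, so $\theta_i$ and $\theta_j$ cross.

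For the converse I would argue by induction on the isometric dimension $n=\mathrm{idim}(G)$ (equivalently, the number of $\Theta$-classes). If $n=2$, then $G$ is an isometric subgraph of $Q_2=C_4$, and all four quadrants being nonempty forces $G=C_4$, which is the desired isometric cycle. If $n>2$, I would pick a $\Theta$-class $\theta_k\notin\{\theta_i,\theta_j\}$ and pass to the contraction $G'=G/\theta_k$. Since contracting $\theta_k$ only deletes the $k$-th coordinate, each nonempty quadrant $\rho_i^p(G)\cap\rho_j^q(G)$ maps to a nonempty quadrant of $G'$; hence $\theta_i$ and $\theta_j$ still cross in $G'$, and by the inductive hypothesis there is an isometric cycle $C'$ of $G'$ on which $\theta_i$ and $\theta_j$ occur. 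It then remains to lift $C'$ back: one shows that an isometric cycle of $G/\theta_k$ is the image of an isometric cycle of $G$, whose preimage either has the same $\Theta$-classes as $C'$ or those classes together with $\theta_k$ on an antipodal pair; in either case the resulting isometric cycle of $G$ still carries edges of $\theta_i$ and of $\theta_j$, completing the induction. (Alternatively, one may attempt a direct construction: take $a\in\rho_i^0(G)\cap\rho_j^0(G)$ and $b\in\rho_i^1(G)\cap\rho_j^1(G)$ at minimum distance, observe via the Djokovi\'c-Winkler characterization that every $a,b$-geodesic has its unique $\theta_i$-edge and unique $\theta_j$-edge as its first and last edges, and then use the convexity of halfspaces to splice two such ``L-shaped'' geodesics, running through the two remaining quadrants, into a cycle.)

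I expect the main obstacle to be exactly the lifting step (equivalently, closing up the cycle in the direct approach): showing that the isometric cycle of the quotient genuinely lifts to an \emph{isometric} — not merely closed — cycle of $G$. This is where the real work lies, and I would carry it out using Propositions~\ref{pro:ConvexityLemma} and~\ref{pro:ConvexSubgraph} (convexity of the halfspaces $W_{uv}$, $W_{vu}$) together with Proposition~\ref{pro:DWRelationonGeodesic} to control distances along the lifted walk; by contrast, the coordinate bookkeeping in the easy direction and the $C_4$ base case are routine.
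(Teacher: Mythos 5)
First, note that the paper does not prove this statement at all: it is imported verbatim from Klav\v zar and Mulder \cite{km02} (it is one of the quoted preliminaries), so there is no in-paper argument to compare yours against. Judged on its own, your forward direction is correct and complete: the antipodality of $\Theta$-equivalent edges on an isometric cycle (which the paper itself records just after this proposition) forces the two $\theta_i$-edges and the two $\theta_j$-edges to interleave around $C$, and your four sample vertices do exhibit all four $(i,j)$-coordinate patterns, so all four quadrants are nonempty.

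The converse, however, has a genuine gap, and it sits exactly where you predicted. In the inductive route, the assertion that an isometric cycle of the contraction $G/\theta_k$ lifts to an isometric cycle of $G$ carrying the same $\Theta$-classes is a substantive lemma (essentially a statement about how isometric cycles behave under expansions, of the kind proved in \cite{ckm20}); it does not follow from Propositions \ref{pro:DWRelationonGeodesic}, \ref{pro:ConvexityLemma} or \ref{pro:ConvexSubgraph}, and you give no argument for it. Worse, it is the whole content of this direction: once one can lift isometric cycles through contractions, the proposition is nearly immediate, so deferring it means the proof is not there. The alternative ``direct'' route has a concrete unaddressed point as well: from a closest pair $a\in\rho_i^0(G)\cap\rho_j^0(G)$, $b\in\rho_i^1(G)\cap\rho_j^1(G)$ you correctly deduce that every $a,b$-geodesic begins with its $\theta_i$- or $\theta_j$-edge and ends with the other, but nothing guarantees the existence of a \emph{second} $a,b$-geodesic traversing the opposite quadrant $\rho_i^0(G)\cap\rho_j^1(G)$ (all $a,b$-geodesics could a priori begin with the $\theta_i$-edge), so the two ``L-shaped'' paths you want to splice need not both exist, and even if they do, the isometricity of the resulting cycle still has to be established. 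To complete either route you would need to actually prove the lifting lemma (e.g.\ by analysing how an isometric cycle of $G/\theta_k$ meets the two sides of the corresponding expansion) or run a careful induction on $d(a,b)$ using the convexity of the halfspaces; as written, the hard half of the equivalence is only a plan.
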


Assume $C$ is an isometric cycle of length $2l$ in a partial cube $G$. Then two edges $e,f\in E(C)$ are in the relation $\Theta$ if and only if they are antipodal edges of $C$ (see \cite{m09}). Therefore, there are exactly $l$ $\Theta$-classes occurring on $C$. If we contract $k$ ($k\leq l-2$) $\Theta$-classes of them, the resulting cycle remains isometric. Combined with Observation \ref{obv:ThetaClass} and Proposition \ref{pro:CrossinIsometricCycle}, %and the fact that the contraction operation does not destroy the isometry of a cycle in a partial cube, 
the crossing relation remains preserved under the contractions. That is:
\begin{pro}\label{pro:CrossingunderContraction}
Let $G$ be a partial cube with $uv, xy\in E(G)$, where $uv$ and $xy$ are not in relation $\Theta_G$, and let $H$ be a pc-minor obtained by taking a sequence of contractions on $G$. If the edges $uv$ and $xy$ are not contracted in the contraction procedure from $G$ to $H$, and $u',v',x',y'$ are corresponding vertices of $u,v,x,y$ in $H$, respectively, then $F^G_{uv}$ and $F^G_{xy}$ cross in $G$ if and only if $F^H_{u'v'}$ and $F^H_{x'y'}$ cross in $H$.
\end{pro}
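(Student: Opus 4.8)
The plan is to reduce to a single contraction and then treat the two implications. By induction on the number of contracted $\Theta$-classes it suffices to prove the statement when $H$ arises from $G$ by contracting a single class $\theta_k$ (note $\theta_k\notin\{F^G_{uv},F^G_{xy}\}$ since $uv$ and $xy$ are not contracted, and Observation \ref{obv:ThetaClass} guarantees that the images $u'v'$ and $x'y'$ are again not in relation $\Theta_H$, so the induction stays within the hypotheses). Write $\pi\colon V(G)\to V(H)$ for the surjective contraction map. I would fix an isometric embedding $G\hookrightarrow Q_n$ in which $F^G_{uv}=\theta_i$ and $F^G_{xy}=\theta_j$ with $i\neq j$ (as $uv$ and $xy$ are not in relation $\Theta_G$); then $\theta_k$ is a coordinate $k\notin\{i,j\}$, the map $\pi$ is deletion of the $k$-th coordinate, $H\hookrightarrow Q_{n-1}$, and $u'v'\in\theta_i$, $x'y'\in\theta_j$ hold in $H$.

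For the forward implication I would follow the route foreshadowed in the paragraph before the statement. Suppose $F^G_{uv}$ and $F^G_{xy}$ cross in $G$; by Proposition \ref{pro:CrossinIsometricCycle} there is an isometric cycle $C$ of length $2l$ carrying an edge $e$ of $F^G_{uv}$ and an edge $f$ of $F^G_{xy}$. If $\theta_k$ does not occur on $C$, then $V(C)$ lies in one halfspace of $\theta_k$; this halfspace is convex, hence isometric, so a shortest path between two vertices of $C$ avoids $\theta_k$, whence $\pi$ is injective and distance-preserving on $V(C)$ and $\pi(C)$ is again an isometric cycle (of the same length $2l$). If $\theta_k$ does occur on $C$, then $C$ carries the three distinct classes $F^G_{uv}$, $F^G_{xy}$, $\theta_k$, so $l\geq 3$, and contracting the single class $\theta_k$ falls under the fact recalled before the statement ($1\leq l-2$ classes contracted), so $\pi(C)$ is an isometric $(2l-2)$-cycle. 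In both cases $e,f\notin\theta_k$, so $e,f$ are not contracted and map to edges $e',f'$ of $\pi(C)$; by Observation \ref{obv:ThetaClass}, $e'\in F^H_{u'v'}$ and $f'\in F^H_{x'y'}$. Hence $\pi(C)$ is an isometric cycle of $H$ meeting both $F^H_{u'v'}$ and $F^H_{x'y'}$, and by Proposition \ref{pro:CrossinIsometricCycle} these two classes cross in $H$.

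The reverse implication is the delicate point, and I expect it to be the main obstacle if one insists on arguing with cycles: lifting an isometric cycle of $H$ to one of $G$ is awkward because a step flipping a coordinate $\neq k$ at the $H$-level may be realizable in $G$ only for one value of the $k$-th coordinate, so a naive lift need neither close up nor stay inside $G$. I would bypass this by arguing with halfspaces directly. Since coordinate $i$ survives the deletion of coordinate $k$, the coordinate description of halfspaces recalled in the introduction gives, with a matching choice of orientation, $W^G_{uv}=\{a\in V(G)\mid a_i=0\}$ and $W^H_{u'v'}=\{b\in V(H)\mid b_i=0\}$, whence $\pi(W^G_{uv})=W^H_{u'v'}$ (for the inclusion ``$\supseteq$'' pull back along $\pi$ any $b$ with $b_i=0$); the same holds for the three remaining halfspaces $W^G_{vu}$, $W^G_{xy}$, $W^G_{yx}$. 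Consequently $\pi$ maps each of the four sets $W^G_{uv}\cap W^G_{xy}$, $W^G_{uv}\cap W^G_{yx}$, $W^G_{vu}\cap W^G_{xy}$, $W^G_{vu}\cap W^G_{yx}$ onto the corresponding intersection of halfspaces in $H$, so such a set is empty in $G$ if and only if its counterpart is empty in $H$; by the definition of the crossing relation this says exactly that $F^G_{uv}$ and $F^G_{xy}$ cross in $G$ if and only if $F^H_{u'v'}$ and $F^H_{x'y'}$ cross in $H$. (This halfspace argument in fact establishes both implications at once; I retain the cycle argument above because it is the one the preceding discussion points to and uses only Observation \ref{obv:ThetaClass} and Proposition \ref{pro:CrossinIsometricCycle}.)
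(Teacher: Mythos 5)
Your proposal is correct. The forward implication is exactly the paper's argument: the paper justifies the proposition only by the remark preceding it, namely that an isometric cycle witnessing the crossing (Proposition \ref{pro:CrossinIsometricCycle}) survives the contraction of at most $l-2$ of its $l$ classes, combined with Observation \ref{obv:ThetaClass}; you spell out the two cases (the contracted class $\theta_k$ occurring or not occurring on the cycle) that the paper leaves implicit. Where you genuinely diverge is the reverse implication. The paper's phrase ``the crossing relation remains preserved under the contractions'' only transparently covers the direction from $G$ to $H$, yet the direction actually invoked later (in Case~2 of (iii)$\Longrightarrow$(iv), where crossing in the contracted graph $G'$ is pulled back to crossing in $G$) is the converse, and as you note a naive lift of an isometric cycle of $H$ to $G$ need not work. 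Your halfspace argument --- that coordinate deletion maps each of the four quadrants $W^G_{uv}\cap W^G_{xy},\dots$ onto the corresponding quadrant of $H$, so emptiness is equivalent on both sides --- closes this gap cleanly, directly from the definition of crossing, and in fact yields both directions at once without any cycle machinery. So your write-up is not only correct but more complete than the paper's own justification on the direction the paper actually uses.
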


Let $G$, $\theta_i$ and $\theta_j$ be defined as Proposition \ref{pro:CrossinIsometricCycle}. A 4-cycle $uvwxu$ is called to be {\em $\theta_i,\theta_j$-alternating} if $uv, xw\in\theta_i$ and $ux, vw\in\theta_j$. For the median graphs, the following proposition is stronger than Proposition \ref{pro:CrossinIsometricCycle}.

\begin{pro}{\em\cite{xfx24}}\label{pro:Crossin4Cycle}
Let $G$ be a median graph, $\theta_i, \theta_j$ two $\Theta$-classes. Then $\theta_i$ and $\theta_j$ cross if and only if there exists a $\theta_i, \theta_j$-alternating 4-cycle.
\end{pro}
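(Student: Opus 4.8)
I would prove the two directions separately; the backward direction is the substantial one.

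\emph{Forward direction.} Suppose $G$ contains a $\theta_i,\theta_j$-alternating $4$-cycle $Z=uvwxu$. I would first record that every $4$-cycle in a partial cube is convex, hence isometric. Indeed $Z$ is chordless (a chord would create a triangle in the bipartite graph $G$), so it is an isometric $4$-cycle and each of its two pairs of opposite edges is $\Theta$-related. If two opposite vertices of $Z$, say $u$ and $w$, had a common neighbour $y\notin V(Z)$, then $uvwy$ would be another chordless $4$-cycle, so $uv\,\Theta\,wy$; combined with $uv\,\Theta\,wx$ and the transitivity of $\Theta$ on the partial cube $G$, this gives $wx\,\Theta\,wy$, contradicting Proposition \ref{pro:DWRelationonGeodesic} since $wx$ and $wy$ are adjacent. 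Hence $Z$ is convex, so isometric; as $E(Z)$ meets both $\theta_i$ and $\theta_j$, Proposition \ref{pro:CrossinIsometricCycle} gives that $\theta_i$ and $\theta_j$ cross.

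\emph{Backward direction.} Assume $\theta_i$ and $\theta_j$ cross; I would induct on the number $k$ of $\Theta$-classes of $G$ (note $k\ge 2$, since $\theta_i\ne\theta_j$). If $k=2$, then $\mathrm{idim}(G)=2$, so $G$ embeds isometrically in $Q_2\cong C_4$; since $\theta_i$ and $\theta_j$ cross, all of $\rho_i^0(G)\cap\rho_j^0(G)$, $\rho_i^0(G)\cap\rho_j^1(G)$, $\rho_i^1(G)\cap\rho_j^0(G)$, $\rho_i^1(G)\cap\rho_j^1(G)$ are non-empty, which forces $G=C_4$, and $C_4$ is itself a $\theta_i,\theta_j$-alternating $4$-cycle. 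If $k\ge 3$, choose a $\Theta$-class $\theta_m\notin\{\theta_i,\theta_j\}$ and let $H$ be the $\theta_m$-contraction of $G$. Then $H$ is a connected median graph with $k-1$ $\Theta$-classes (median graphs form a pc-minor-closed class), and $\theta_i,\theta_j$ still cross in $H$ by Proposition \ref{pro:CrossingunderContraction}; by the induction hypothesis $H$ contains a $\theta_i,\theta_j$-alternating $4$-cycle $Z$.

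\emph{Lifting $Z$ to $G$ — the main obstacle.} Write $\theta_m=F_{uv}$. Since no edge of $\theta_m$ lies inside a halfspace, the contraction map $\phi\colon G\to H$ is injective on $W_{uv}$ and on $W_{vu}$, $H=\phi(W_{uv})\cup\phi(W_{vu})$, and $\phi(W_{uv})\cap\phi(W_{vu})=\phi(U_{uv})$; moreover it is classical (the theory of convex expansions, Mulder) that, because $G$ is a median graph, $\phi(W_{uv})$ and $\phi(W_{vu})$ are convex subgraphs of $H$ and their intersection $\phi(U_{uv})$ is convex as well — this is exactly where Proposition \ref{pro:UisConvex}, the defining property of median graphs, is needed. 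Now $Z\cong C_4$ is convex in $H$ by the argument of the forward direction, so $Z\cap\phi(W_{uv})$ and $Z\cap\phi(W_{vu})$ are convex subgraphs of $C_4$ covering $C_4$ whose intersection is convex; an immediate case check shows this forces one of them to be all of $C_4$. Hence $Z$ lies inside $\phi(W_{uv})$ or $\phi(W_{vu})$, on which $\phi$ is an isomorphism, so $Z$ lifts to a $4$-cycle $Z'$ in $G$. Finally, since corresponding edges of $G$ and $H$ satisfy $e\,\Theta_G\,f\iff\phi(e)\,\Theta_H\,\phi(f)$ (Observation \ref{obv:ThetaClass}), the two $\Theta$-classes of $Z'$ are precisely the classes of $G$ that contract to $\theta_i$ and $\theta_j$, namely $\theta_i$ and $\theta_j$; thus $Z'$ is a $\theta_i,\theta_j$-alternating $4$-cycle, completing the induction. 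The two points that need care are the convexity of the halfspace-images in $H$ (where median-ness, via Proposition \ref{pro:UisConvex}, is essential) and the elementary fact that $C_4$ admits no such non-trivial convex cover; this last point is exactly what rules out the scenario in which $Z$ would lift to a longer cycle (for instance a $6$-cycle) rather than a $4$-cycle.
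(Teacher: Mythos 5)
The paper itself does not prove this proposition; it is quoted from \cite{xfx24}, so there is no in-paper proof to match your argument against. Your forward direction is correct (in fact, once $Z$ is induced---which bipartiteness forces---it is automatically isometric, so Proposition \ref{pro:CrossinIsometricCycle} already applies; the convexity of $Z$ that you establish is only needed later), and the induction on the number of $\Theta$-classes, contracting a third class $\theta_m$ and lifting the alternating $4$-cycle back, is a workable strategy.

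The genuine gap is in the case check of the lifting step. Two convex subgraphs of $C_4$ that cover it and have convex intersection do \emph{not} have to include all of $C_4$: two opposite edges of $C_4$ are each convex, together they cover $C_4$, and their intersection is empty, which is convex. So the hypotheses you list do not by themselves force $Z\subseteq\phi(W_{uv})$ or $Z\subseteq\phi(W_{vu})$. To exclude the opposite-edges configuration you need the further structural fact that every edge of $H$ lies entirely inside $\phi(W_{uv})$ or entirely inside $\phi(W_{vu})$: an edge of $H$ is the image of an edge of $G$ not in $F_{uv}=\theta_m$, and since $V(G)=W_{uv}\cup W_{vu}$ in a bipartite graph, both endpoints of such an edge lie in the same halfspace. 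Granting this, if $Z$ met both $\phi(W_{uv})\setminus\phi(U_{uv})$ and $\phi(W_{vu})\setminus\phi(U_{uv})$, the two offending vertices would be nonadjacent, hence antipodal in $Z$, and the remaining two vertices of $Z$ would lie in $\phi(U_{uv})$; then $Z\cap\phi(W_{uv})$ would contain a path of length $2$ whose endpoints have a second common neighbour outside $\phi(W_{uv})$, contradicting the convexity of $\phi(W_{uv})$. That convexity itself (Mulder's convex expansion theorem for median graphs) is true, but it is not among the propositions stated in the paper and does not follow immediately from Proposition \ref{pro:UisConvex}, so it would have to be quoted from \cite{m78} or proved. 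With these two repairs your proof goes through.
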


\begin{comment}
\begin{pro}{\em\cite{xfx24}}\label{pro:CrossinConvexSubgraph}
Let $G$ be a median graph, $H$ a convex subgraph of $G$ and $\theta_1,\theta_2$ two crossing $\Theta$-classes. If both $\theta_1$ and $\theta_2$ occur in $H$, then they cross in $H$.
\end{pro}
\end{comment}

The {\em crossing graph} $G^{\#}$ of the partial cube $G$ is the graph whose vertices are corresponding to the $\Theta$-classes of $G$, and $\theta_i$ and $\theta_j$ are adjacent in $G^{\#}$ if and only if they cross in $G$. The crossing graph was introduced by Bandelt and Dress \cite{bd92} under the name of incompatibility graph and extensively studied by Klav\v zar and Mulder \cite{km02}. For the relation between the crossing graphs and the simplex graphs, Klav\v zar and Mulder proved:

\begin{pro}{\em\cite{km02}}\label{pro:SimplexvsCrossing}
For any graph $G$, $(S(G))^{\#}=G$.
\end{pro}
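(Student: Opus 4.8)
The plan is to build an explicit bijection $v\mapsto\theta_v$ between $V(G)$ and the set of $\Theta$-classes of $S(G)$, and then to check that it converts the edge relation of $G$ into the crossing relation of $S(G)$; by the definition of the crossing graph this yields $(S(G))^{\#}=G$.

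First I would pin down the $\Theta$-classes of $S(G)$. By Proposition~\ref{pro:SimplexGraphvsMedianGraph}, $S(G)$ is a median graph, hence a partial cube, and sending a clique $A$ of $G$ to its characteristic vector embeds $S(G)$ isometrically into $Q_n$ with $n=|V(G)|$: given cliques $A,B$, first deleting the vertices of $A\setminus B$ from $A$ one at a time and then adding those of $B\setminus A$ passes through cliques only, giving a path of length $|A\triangle B|=d_{Q_n}(A,B)$, which is the trivial lower bound. Hence, by the coordinate description of $\Theta$-classes recalled in Section~1, every $\Theta$-class of $S(G)$ has the form $\theta_v$ for a unique $v\in V(G)$, where $\theta_v$ consists of the edges joining a clique $A$ with $v\notin A$ to the clique $A\cup\{v\}$; each such class is nonempty (it contains the edge between $\emptyset$ and $\{v\}$), so $v\mapsto\theta_v$ is a bijection onto $V((S(G))^{\#})$.

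Next I would show that $\theta_a$ and $\theta_b$ cross in $S(G)$ if and only if $ab\in E(G)$ (with $a\neq b$). Since $S(G)$ is a median graph, Proposition~\ref{pro:Crossin4Cycle} reduces this to the existence of a $\theta_a,\theta_b$-alternating 4-cycle. In one direction, if $ab\in E(G)$ then $\emptyset,\{a\},\{a,b\},\{b\}$ are four distinct cliques whose cyclic edge labels are $a,b,a,b$, i.e.\ a $\theta_a,\theta_b$-alternating 4-cycle. Conversely, given a $\theta_a,\theta_b$-alternating 4-cycle $C_1C_2C_3C_4$ with $C_1C_2,C_4C_3\in\theta_a$ and $C_1C_4,C_2C_3\in\theta_b$, one reads off $C_2=C_1\triangle\{a\}$, $C_4=C_1\triangle\{b\}$ and $C_3=C_1\triangle\{a,b\}$, so setting $C_0:=C_1\setminus\{a,b\}$ the four vertices are exactly $\{C_0\cup S:S\subseteq\{a,b\}\}$; in particular $C_0\cup\{a,b\}$ is a clique of $G$, which forces $ab\in E(G)$. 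Combined with the previous paragraph, $v\mapsto\theta_v$ is a graph isomorphism $G\to(S(G))^{\#}$.

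The argument is short; the only delicate point is the bookkeeping in the last paragraph — matching an edge of $S(G)$ with the coordinate $\Theta$-class determined by the single vertex in which its two cliques differ, and checking that the vertex set of a $\theta_a,\theta_b$-alternating 4-cycle really has the form $\{C_0\cup S:S\subseteq\{a,b\}\}$ — after which the equivalence ``crossing $\Leftrightarrow$ adjacent'' is immediate. A route that avoids the hypercube embedding, verifying the Djokovi\'c-Winkler relation on $E(S(G))$ directly (two edges are $\Theta$-related exactly when they are labelled by the same vertex of $G$), is also available but seems less economical than reusing the coordinate picture.
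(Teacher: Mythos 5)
Your argument is correct. Note, however, that the paper does not prove this proposition at all --- it is quoted from Klav\v zar and Mulder \cite{km02} --- so there is no internal proof to compare against; what you have written is a valid self-contained derivation. The two halves are both sound: the characteristic-vector embedding is isometric because the path $A\to\cdots\to A\cap B\to\cdots\to B$ stays inside the clique complex (subsets of cliques are cliques), so the coordinate description of $\Theta$-classes applies and $v\mapsto\theta_v$ is indeed a bijection onto the $\Theta$-classes (each $\theta_v$ being nonempty via the edge $\emptyset$--$\{v\}$); and the alternating-4-cycle bookkeeping in the converse direction is right, since the four vertices realize all four membership patterns of $\{a,b\}$ and the one containing both must be a clique. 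One small economy you could take: for the direction ``$\theta_a,\theta_b$ cross $\Rightarrow ab\in E(G)$'' you do not need Proposition~\ref{pro:Crossin4Cycle} at all --- by the definition of crossing the quadrant $\rho_a^1(S(G))\cap\rho_b^1(S(G))$ is nonempty, i.e.\ some clique of $G$ contains both $a$ and $b$, which already forces $ab\in E(G)$; this also makes the proof work verbatim from the definition of crossing rather than through the median-graph machinery.
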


Let $G$ be a graph. Denote the number of induced $i$-cubes of $G$ by $\alpha_i(G)$ for $i\geq 0$. The polynomial

\begin{equation*}
C(G,x):=\sum_{i\geq 0}\alpha_i(G)x^i
\end{equation*}
is called the {\em cube polynomial} of $G$ \cite{bks03}.

Let $a_i(G)$ $(i\geq 1)$ be the number of clique of size $i$ in $G$, and let $a_0(G)=1$. The {\em clique polynomial} of a graph $G$, introduced by Hoede and Li \cite{hl94},  is defined as follows:

\begin{equation*}
Cl(G,x):=\sum_{i\geq 0}a_i(G)x^i.
\end{equation*}

Xie et al. gave a relation between the cube polynomials of median graphs and the clique polynomials of their crossing graphs as follows:

\begin{pro}{\em\cite{xfx24}}\label{pro:=forMedianGraph}
Let $G$ be a median graph and $G\neq K_1$. Then $C(G,x)=Cl(G^{\#},x+1)$.
\end{pro}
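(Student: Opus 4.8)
The plan is to prove the identity by induction on $n=\mathrm{idim}(G)$, the number of $\Theta$-classes of $G$, exploiting the way both the cube polynomial and the clique polynomial of the crossing graph decompose when $G$ is cut along a single $\Theta$-class. The identity holds trivially when $G=K_1$ (both sides equal the constant polynomial $1$), and this serves as the base case; the hypothesis $G\neq K_1$ is present only so that $G^{\#}$ is nonempty. For the inductive step, fix a $\Theta$-class $\theta_0$ of $G$ and an edge $uv\in\theta_0$, and put $G_W:=G[W_{uv}]$, $G_{W'}:=G[W_{vu}]$, $G_U:=G[U_{uv}]$. Halfspaces are convex and $U_{uv}$ is convex by Proposition \ref{pro:UisConvex}, so all three are convex---hence median---subgraphs of $G$; none of them contains the class $\theta_0$, so each has fewer than $n$ $\Theta$-classes and the inductive hypothesis applies to it.

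First I would record the expansion recursion for the cube polynomial. An induced cube of $G$ that uses no $\theta_0$-edge is connected and hence lies wholly inside $W_{uv}$ or wholly inside $W_{vu}$, so it is an induced cube of $G_W$ or of $G_{W'}$; an induced cube that does use a $\theta_0$-edge consists of two opposite facets---one induced cube of $G_U$ inside $U_{uv}$, together with its $\theta_0$-matched copy inside $U_{vu}$---joined by $\theta_0$-edges, and so corresponds bijectively to an induced cube of $G_U$ of one lower dimension. Counting by dimension gives
\begin{equation*}
C(G,x)=C(G_W,x)+C(G_{W'},x)+x\,C(G_U,x),
\end{equation*}
the standard behaviour of the cube polynomial under expansion (cf.\ \cite{bks03}).

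Next I would prove the parallel decomposition of $G^{\#}$. Split $V(G^{\#})\setminus\{\theta_0\}$ into the classes $A$ occurring only on the $W_{uv}$-side, the classes $B$ occurring only on the $W_{vu}$-side, and the classes $M$ occurring on both sides. Then: (i) a class crosses $\theta_0$ exactly when it occurs in $G_U$---one direction builds a $\theta_0$-alternating $4$-cycle from a $\theta_0$-matched pair of edges in $U_{uv}$ and $U_{vu}$, the other extracts such an edge from an alternating $4$-cycle via Proposition \ref{pro:Crossin4Cycle}---so $M=N_{G^{\#}}(\theta_0)=V(G_U^{\#})$; (ii) no class of $A$ crosses a class of $B$, since a witnessing alternating $4$-cycle would be a connected $4$-cycle meeting both sides of $\theta_0$ without using any $\theta_0$-edge; and (iii) the crossing relation of $G$ restricts correctly to the convex subgraphs $G_W,G_{W'},G_U$---``crossing in the subgraph implies crossing in $G$'' because convex subgraphs are isometric, and the converse because the crossing relation of a median graph is inherited by any convex subgraph in which both classes still occur (cf.\ \cite{xfx24}). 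Hence $G_W^{\#}=G^{\#}[A\cup M]$, $G_{W'}^{\#}=G^{\#}[B\cup M]$, $G_U^{\#}=G^{\#}[M]$, and every clique of $G^{\#}$ is a clique of $G_W^{\#}$, a clique of $G_{W'}^{\#}$ (these two families meeting exactly in the cliques of $G_U^{\#}$), or $\{\theta_0\}$ together with a clique of $G_U^{\#}$. Summing $x^{|K|}$ over all cliques $K$ gives $Cl(G^{\#},x)=Cl(G_W^{\#},x)+Cl(G_{W'}^{\#},x)+(x-1)\,Cl(G_U^{\#},x)$, and replacing $x$ by $x+1$ yields
\begin{equation*}
Cl(G^{\#},x+1)=Cl(G_W^{\#},x+1)+Cl(G_{W'}^{\#},x+1)+x\,Cl(G_U^{\#},x+1).
\end{equation*}
Comparing this with the cube-polynomial recursion and applying the inductive hypothesis to $G_W$, $G_{W'}$, $G_U$ gives $C(G,x)=Cl(G^{\#},x+1)$, closing the induction.

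The two expansion recursions amount to bookkeeping; the step I expect to carry the real content is (iii)---verifying that the crossing relation of $G$ restricts correctly to the convex subgraphs $G_W,G_{W'},G_U$, so that their crossing graphs sit inside $G^{\#}$ exactly as the induced subgraphs $G^{\#}[A\cup M]$, $G^{\#}[B\cup M]$, $G^{\#}[M]$. This ``lockstep'' between cutting $G$ along $\theta_0$ and the corresponding decomposition of $G^{\#}$ is precisely where medianness enters (the analogue fails for general partial cubes), and it is what lets the two inductions run in parallel.
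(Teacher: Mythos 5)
Note first that the paper does not prove this proposition: it is imported verbatim from \cite{xfx24} as a known result, so there is no in-paper argument to compare yours against. Judged on its own terms, your proof is correct and the strategy is sound: you run the cube-polynomial expansion recursion $C(G,x)=C(G_W,x)+C(G_{W'},x)+x\,C(G_U,x)$ in parallel with the clique-count decomposition $Cl(G^{\#},x)=Cl(G_W^{\#},x)+Cl(G_{W'}^{\#},x)+(x-1)\,Cl(G_U^{\#},x)$ of the crossing graph along a single $\Theta$-class, and both recursions, the partition $A\cup B\cup M$, the identification $M=N_{G^{\#}}(\theta_0)=V(G_U^{\#})$, and the inclusion--exclusion bookkeeping all check out (the base case $K_1$ and the degenerate subcases where $G_W$, $G_{W'}$ or $G_U$ collapse to $K_1$ are handled correctly by the convention $a_0=1$). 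You correctly identify step (iii) as the crux: that two crossing $\Theta$-classes of a median graph still cross in any convex subgraph in which both occur. This is a genuine result of \cite{xfx24} (it even appears, commented out, in this paper's source as a proposition ``CrossinConvexSubgraph''), so citing it is legitimate and introduces no circularity, but be aware that a fully self-contained proof would have to establish it, and it is not a one-liner. Two further facts are used silently and are covered by your citation of \cite{bks03}: that induced hypercubes of a median graph are convex (so the $\theta_0$-edges of an induced cube form exactly one parallel class, making the facet map well defined), and that the $F_{uv}$-matching induces an isomorphism $G[U_{uv}]\to G[U_{vu}]$ (so the facet map is surjective). With those inputs acknowledged, the induction closes and the identity follows.
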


Combined with the fact that $\alpha_0(G)=|V(G)|$ for any graph $G$, the following proposition is an immediate corollary of Proposition \ref{pro:=forMedianGraph}:

\begin{pro}\label{pro:VertexNumber}
Let $G, G'$ be two median graphs and $G, G'\neq K_1$. If $G^{\#}\cong (G')^{\#}$, then $C(G,x)=C(G',x)$. In particular, $|V(G)|=|V(G')|$.
\end{pro}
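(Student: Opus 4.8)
The plan is to deduce this directly from Proposition \ref{pro:=forMedianGraph}, so the argument is essentially a two-line computation together with one trivial observation. First I would record that the clique polynomial is an isomorphism invariant: if $H_1\cong H_2$ then $a_i(H_1)=a_i(H_2)$ for all $i\geq 0$ (an isomorphism of graphs maps cliques of size $i$ bijectively to cliques of size $i$), hence $Cl(H_1,y)=Cl(H_2,y)$ as polynomials in $y$. This requires no real work but is the only fact beyond the quoted propositions that is needed.

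Next, since $G\neq K_1$ and $G'\neq K_1$, Proposition \ref{pro:=forMedianGraph} applies to both, giving $C(G,x)=Cl(G^{\#},x+1)$ and $C(G',x)=Cl((G')^{\#},x+1)$. Using the hypothesis $G^{\#}\cong (G')^{\#}$ together with the invariance just noted, we have $Cl(G^{\#},y)=Cl((G')^{\#},y)$; substituting $y=x+1$ yields
\begin{equation*}
C(G,x)=Cl(G^{\#},x+1)=Cl((G')^{\#},x+1)=C(G',x),
\end{equation*}
which is the first assertion.

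Finally, to obtain $|V(G)|=|V(G')|$, I would compare constant terms. By definition $C(G,x)=\sum_{i\geq 0}\alpha_i(G)x^i$, so its value at $x=0$ is $\alpha_0(G)$, the number of induced $0$-cubes of $G$, i.e. $\alpha_0(G)=|V(G)|$ (and likewise for $G'$). Setting $x=0$ in the displayed identity then gives $|V(G)|=\alpha_0(G)=\alpha_0(G')=|V(G')|$. I do not anticipate any obstacle here: the only point worth stating explicitly is that the hypotheses $G,G'\neq K_1$ are exactly what is needed to invoke Proposition \ref{pro:=forMedianGraph}, and everything else is bookkeeping.
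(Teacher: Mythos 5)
Your proposal is correct and matches the paper's own (one-line) justification exactly: the paper also derives this as an immediate corollary of Proposition \ref{pro:=forMedianGraph} combined with the observation that $\alpha_0(G)=|V(G)|$. No issues.
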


Finally, for daisy cubes, we have
\begin{pro}{\em (see Proposition 2.2 in \cite{km19} and Proposition 2.4 in \cite{t20})}\label{pro:DaisyCubeisPCMinorClosed}
The class of daisy cubes is pc-minor-closed.
\end{pro}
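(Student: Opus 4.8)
The plan is to show directly that the class of daisy cubes is closed under the two operations that generate all pc-minors, namely a single elementary restriction to a halfspace and a single contraction of a $\Theta$-class; since a general restriction is an intersection of halfspaces (equivalently, an iteration of single-halfspace restrictions) and every pc-minor arises from a finite sequence of such operations (in any order, by \cite{ckm20,m18}), this suffices. Throughout I will use the reformulation that $G$ is a daisy cube exactly when it admits an isometric embedding into some $Q_n$ whose image $Y:=V(G)\subseteq B^n$ is downward-closed (an order ideal) under the coordinatewise order; indeed, $G=Q_n(Y)$ with $Y$ its own generating set, and for downward-closed $Y$ the induced subgraph $Q_n[Y]$ is isometric in $Q_n$ via the ``decrease-then-increase'' shortest path through $u\wedge v$. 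I will also use the elementary fact that an isometric subgraph of any simple graph is automatically an induced subgraph (adjacent vertices of the ambient graph are at distance $1$, hence adjacent in the subgraph), which lets me conclude ``daisy cube'' from ``isometric subgraph of $Q_m$ with downward-closed vertex set''.

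For the restriction, suppose $G=Q_n[Y]$ with $Y$ downward-closed. After deleting the (now constant) $i$-th coordinate, the halfspace subgraphs $\rho_i^0(G)$ and $\rho_i^1(G)$ become the induced subgraphs $Q_{n-1}[Y_0]$ and $Q_{n-1}[Y_1]$, where $Y_\varepsilon:=\{b\in B^{n-1}: b^{(\varepsilon)}\in Y\}$ and $b^{(\varepsilon)}$ is the extension of $b$ inserting $\varepsilon$ in position $i$. Each $Y_\varepsilon$ is downward-closed in $B^{n-1}$: if $c\le b$ then $c^{(\varepsilon)}\le b^{(\varepsilon)}$, so $b^{(\varepsilon)}\in Y$ forces $c^{(\varepsilon)}\in Y$. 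Hence both restrictions are daisy cubes.

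For the contraction, it is a standard fact (implicit in the $\Theta$-class/coordinate correspondence and in the cited results that contractions of partial cubes are partial cubes) that $G/\theta_i$ embeds isometrically into $Q_{n-1}$ with vertex set $\pi_i(Y)$, where $\pi_i$ deletes the $i$-th coordinate. It remains to check that $\pi_i(Y)$ is downward-closed: given $b=\pi_i(a)$ with $a\in Y$ and $c\le b$, let $a'\in B^n$ be $a$ with its $i$-th coordinate reset to $0$; then $a'\le a$, so $a'\in Y$, and $\tilde c:=c^{(0)}$ satisfies $\tilde c\le a'$ coordinatewise, whence $\tilde c\in Y$ and $c=\pi_i(\tilde c)\in\pi_i(Y)$. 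Being an isometric, hence induced, subgraph of $Q_{n-1}$ on the downward-closed set $\pi_i(Y)$, the graph $G/\theta_i$ is a daisy cube, and we are done.

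The step I expect to demand the most care is the contraction case, specifically the assertion that $G/\theta_i$ is the \emph{induced} subgraph of $Q_{n-1}$ on $\pi_i(Y)$, not merely a spanning subgraph of it — this is precisely where ``isometric $\Rightarrow$ induced'' together with the known isometric embedding of the contraction into $Q_{n-1}$ is used, so one should make sure these cited facts are invoked correctly. (One can also bypass this by a short direct argument recovering each edge $bc$ of $Q_{n-1}[\pi_i(Y)]$ from an edge of $G$, pushing a preimage of $b$ or $c$ down in the $i$-th coordinate using downward-closedness; but the ``isometric $\Rightarrow$ induced'' route is cleaner.) Everything else reduces to the two one-line monotonicity checks above.
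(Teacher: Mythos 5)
Your argument is correct. Note, however, that the paper does not actually prove this proposition: it is stated with a citation to Proposition~2.2 of \cite{km19} and Proposition~2.4 of \cite{t20}, which respectively handle (in essence) the halfspace/restriction case and the contraction case. So your contribution is a self-contained direct proof of a result the paper imports. The reduction to single elementary restrictions and single contractions is legitimate (general restrictions are iterated halfspace restrictions, and pc-minors are finite sequences of these operations in any order), and the key reformulation --- $G$ is a daisy cube iff $G\cong Q_n[Y]$ for a downward-closed $Y\subseteq B^n$ --- matches the paper's definition exactly, since a downward-closed set is its own generating set and, conversely, $Q_n(X)$ is by definition the induced subgraph on the down-closure of $X$. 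Both monotonicity checks ($Y_\varepsilon$ and $\pi_i(Y)$ downward-closed) are right. One small simplification worth noting: because the paper's definition of a daisy cube only requires being the \emph{induced} subgraph of a hypercube on a downward-closed set, you do not need the ``isometric $\Rightarrow$ induced'' detour in the contraction case at all; your parenthetical direct argument (recovering every edge $bc$ of $Q_{n-1}[\pi_i(Y)]$ from an edge of $G$ by first resetting the $i$-th coordinate of a preimage to $0$ via downward-closedness) already identifies $G/\theta_i$ with $Q_{n-1}[\pi_i(Y)]$ and is the cleaner route, since it avoids invoking the unproved ``standard fact'' about the isometric embedding of the contraction. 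As written, that ``standard fact'' is the only step resting on outside results, and it is indeed covered by the paper's cited sources \cite{ckm20,m18} on contractions of partial cubes, so the proof stands either way.
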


\section{Proof of Theorem \ref{thm:MainResult}}

We now proceed to prove Theorem \ref{thm:MainResult}.

\begin{proof}[Proof of Theorem \ref{thm:MainResult}]When $G=K_1$, $G$ is a simplex graph of a empty graph, i.e., the vertex set is empty, and therefore Theorem \ref{thm:MainResult} is trivial in this case. Now, we consider the case of $G\neq K_1$, in other words, $\mathrm{idim}(G)\geq 1$.

(i) $\Longrightarrow$ (ii). W.l.o.g., assume $G=S(H)$ for a graph $H$ with $V(H)=[n]$. By definition, each vertex in $G$ corresponds to a clique of $H$, which is a subset of $[n]$. Recall the bijection $\pi:B^n\to 2^{[n]}$ used in the proof of Theorem \ref{thm:CliqueComplex}. Then $\pi^{-1}$ maps a clique $C$ of $H$ to a binary string  $v=\pi^{-1}(C)=a_1a_2\cdots a_n$: $i\in C\Longleftrightarrow a_i=1$ for $i\in [n]$. Since the clique complex of $H$ is an ASC, $\pi^{-1}$ induces an isomorphism from $G$ to a partial cube $G'$ such that if $v\in V(G')$, then for any $v'\leq v$, $v'\in V(G')$. By definition, $G'$ is a daisy cube, and therefore  $G$ is also a daisy cube.

(ii) $\Longrightarrow$ (iii). By Propositions \ref{pro:ForbiddenMinorofMedianGraph} and \ref{pro:DaisyCubeisPCMinorClosed}, the classes of median graphs and daisy cubes are pc-minor-closed. Thus,  we only need to show that $P_4$ is not a daisy cube.

Assume, for contradiction, that $P_4:=uvwx$ is a daisy cube. By Proposition \ref{pro:DWRelationonGeodesic}, any two different edges of $P_4$ are not in relation $\Theta$, so $\mathrm{idim}(P_4)=3$. W.l.o.g., assume $uv\in\theta_1$, $vw\in\theta_2$, $wx\in\theta_3$, where $\theta_1=\{ab|a=0a_2a_3, b=1a_2a_3\}$, $\theta_2=\{ab|a=a_10a_3, b=a_11a_3\}$, and $\theta_3=\{ab|a=a_1a_20, b=a_1a_21\}$. By the definition of daisy cubes, one vertex in $V(P_4)$ must be labelled 000. If $u=000$, then $v=100$, $w=110$, and $x=111$. However,  the definition of daisy cubes requires $010\in V(P_4)$, which is a contradiction. Similarly, if $v=000$, then $u=100$, $w=010$, and $x=011$, and the definition of daisy cubes requires $001\in V(P_4)$, another contradiction. The same reasoning applies to the case $w=000$ or $x=000$ due to the symmetry of $P_4$. Thus, $P_4$ is not a daisy cube. 

(iii) $\Longrightarrow$ (iv). Assume, for contradiction, that the $\Theta$-class $F^G_{uv}$ is not peripheral; that is, $W^G_{uv}-U^G_{uv}\neq\emptyset$ and $W^G_{vu}-U^G_{vu}\neq\emptyset$. Choose $a\in W^G_{uv}-U^G_{uv}$, $b\in W^G_{vu}-U^G_{vu}$ such that $d(a,b)$ is minimal. Then $a$ (resp. $b$) is adjacent to a vertex in $U^G_{uv}$ (resp. $U^G_{vu}$), otherwise we could select $a'\in W^G_{uv}-U^G_{uv}$ and $b'\in W^G_{vu}-U^G_{vu}$ such that $d(a',b')<d(a,b)$, contradicting the minimality of $d(a, b)$. Since $a\in W^G_{uv}$, $b\in W^G_{vu}$ and $F^G_{uv}$ is an edge cutset of $G$, there exists an edge in $F^G_{uv}$ on any $a,b$-geodesic. Furthermore,  since $a\not\in U^G_{uv}$ and $b\not\in U^G_{vu}$, $d(a,b)\geq 3$. By Proposition \ref{pro:UisConvex}, $G[U^G_{uv}]$ is convex, so we can assume that $P:=au_1u_2\cdots u_kv_kb$ is an $a,b$-geodesic where $u_1,u_2,\cdots,u_k\in U^G_{uv}$ and $v_k\in U^G_{vu}$. We consider two cases.

{\bf Case 1.} $k=1$.

In this case, $P=au_1v_1b$ is a $P_4$. If $P$ is a convex subgraph, then it is a pc-minor of $G$ by Proposition \ref{pro:ConvexSubgraph}, contradicting (iii). If $P$ is not convex, then there exists another $a,b$-geodesic $\tilde{P}:=a\tilde{u}_1\tilde{v}_1b$ such that $\tilde{P}\neq P$. W.l.o.g., assume $\tilde{u}_1\neq u_1$. By Proposition \ref{pro:DWRelationonTwoGeodesic}, there is an edge in $E(\tilde{P})\cap F^G_{uv}$. Since $a\not\in U^G_{uv}$ (resp. $b\not\in U^G_{vu}$), $a\tilde{u}_1\not\in F^G_{uv}$ (resp. $\tilde{v}_1b\not\in F^G_{uv}$). Thus, $\tilde{u}_1\tilde{v}_1\in F^G_{uv}$, implying $\tilde{u}_1\in U^G_{uv}$. Since $\tilde{u}_1\neq u_1$ and $G$ is bipartite, the path $\tilde{u}_1au_1$ is a path of length 2, and further, a $u_1,\tilde{u}_1$-geodesic. Since $u_1,\tilde{u}_1\in U^G_{uv}$, by proposition \ref{pro:UisConvex}, $a\in U^G_{uv}$, contradicting our assumption.

{\bf Case 2.} $k\geq 2$.

In this case, let $v_i$ ($1\leq i\leq k-1$) be another endvertex of the edge $u_iv_i$ such that $u_iv_i\in F^G_{uv}$. Contract the $\Theta$-classes $F^G_{u_1u_2},F^G_{u_2u_3},\cdots,F^G_{u_{k-1}u_k}$, and let the resulting graph be $G'$. By Proposition \ref{pro:DWRelationonGeodesic}, the $\Theta$-classes $F^G_{au_1}$ and $F^G_{uv}$ are not contracted. Since $G$ is a median graph, $G'$ is also a median graph by Proposition \ref{pro:ForbiddenMinorofMedianGraph}. Denote the corresponding vertex of $u_1,u_2,\cdots,u_k$ in $G'$ by $u'$, that of $v_1,v_2,\cdots,v_k$ by $v'$,  $a$ by $a'$, and $b$ by $b'$ (see Fig. \ref{fig:abgeodesic}). We now prove that $a'\not\in U^{G'}_{u'v'}$.

\begin{figure}[!htbp]
\centering
\scalebox{0.5}[0.5]{\includegraphics{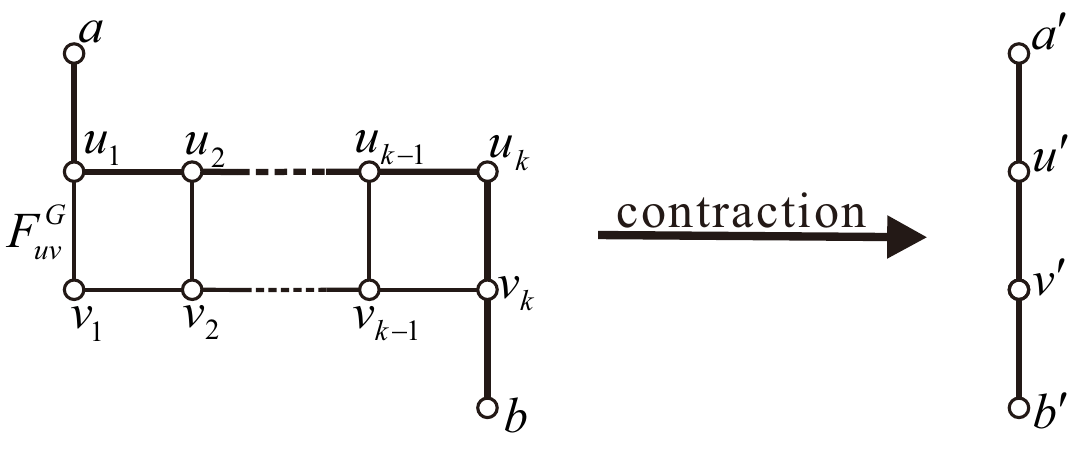}}
\caption{Illustration for Case 2 in the proof of (iii)$\Longrightarrow$(iv).}{\label{fig:abgeodesic}}
\end{figure}

By contradiction, assume $a'\in U^{G'}_{u'v'}$. Then the edges $a'u'$ and $u'v'$ are in an $F^{G'}_{a'u'},F^{G'}_{u'v'}$-alternating 4-cycle, so $F^{G'}_{a'u'}$ and $F^{G'}_{u'v'}$ cross in $G'$ by Proposition \ref{pro:Crossin4Cycle}. By Proposition \ref{pro:CrossingunderContraction}, $F^{G}_{au_1}$ and $F^{G}_{uv}$ cross in $G$. Thus, by Proposition \ref{pro:Crossin4Cycle}, there is an $F^{G}_{au_1},F^{G}_{uv}$-alternating 4-cycle in $G$, which implies there is an edge $e\in E(G[U^G_{uv}])$ such that $au_1\,\Theta_G\,e$. By Propositions \ref{pro:ConvexityLemma} and \ref{pro:UisConvex}, $a\in U^G_{uv}$, a contradiction with the hypothesis.

Similarly, $b'\not\in U^{G'}_{v'u'}$. We can discuss the 3-path $P':=a'u'v'b'$ in $G'$ similar to Case 1. It can be deduced that $P'$ is a pc-minor of $G'$, so is a pc-minor of $G$, a contradiction with (iii).

(iv) $\Longrightarrow$ (v). Assume $\mathrm{idim}(G)=n$. Let us assign a labelling of $B^n$ to the vertices of $G$, similar to the one presented in Algorithm 1 in \cite{t20}. Let $F_{u_1v_1},F_{u_2v_2},\cdots,F_{u_nv_n}$ be the $n$ different $\Theta$-classes of $G$. Since every $\Theta$-class is peripheral, for any $i\in [n]$, $W_{u_iv_i}=U_{u_iv_i}$ or $W_{v_iu_i}=U_{v_iu_i}$. For each $i\in [n]$, if $W_{u_iv_i}=U_{u_iv_i}$, set the $i$-th coordinate of the label of $v\in W_{v_iu_i}$ to 0 and the one of $v\in W_{u_iv_i}$ to 1; otherwise, set the $i$-th coordinate of the label of $v\in W_{u_iv_i}$ to 0 and the one of $v\in W_{v_iu_i}$ to 1. Since we assign different coordinates of the labels each time, this labelling is well-defined. Under this labelling, for any $i\in [n]$, if $a_1\cdots a_{i-1}1a_{i+1}\cdots a_n\in V(G)$, then  $a_1\cdots a_{i-1}1a_{i+1}\cdots a_n\in U_{u_iv_i}$ or $U_{v_iu_i}$, so $a_1\cdots a_{i-1}0a_{i+1}\cdots a_n\in V(G)$. By analogy, we can deduce that $0^n:=00\cdots 0\in V(G)$. Furthermore, If $v\in V(G)$, then for any $u\leq v$, $u\in V(G)$. Since $\mathrm{idim}(G)=n$, the halfspace $\{a=a_1a_2\cdots a_n\in V(G)|a_i=1\}$ is not empty for any $i\in[n]$. Let $w_i$ be a vertex in this halfspace. %there must exists a vertex $w_i\in V(G)$ such that the $i$-th coordinate of its label is 1 for any $i\in [n]$.
Denote $x_i:=00\cdots 010\cdots 0$ where the $i$-th coordinate of its label is 1 and all of other coordinates are 0. Then $x_i\leq w_i$ and further $x_i\in V(G)$. In addition, $x_i$ ($i\in [n]$) is adjacent to $0^n$. Thus $n\leq\deg (0^n)\leq\mathrm{idim}(G)=n$. Therefore $\deg (0^n)=\mathrm{idim}(G)$.

(v) $\Longrightarrow$ (i). Assume $\mathrm{idim}(G)=n$. There exists a vertex such that its degree equals  $n$. W.l.o.g., let us label this vertex as $0^n$. Denote $L_k:=\{v\in V(G)|d(v,0^n)=k\}$ for $k\geq 1$. Then $L_1=\{x_1,x_2,\cdots,x_n\}$, where $x_i=00\cdots 010\cdots 0$ (only the $i$-th coordinate is 1, and all of other coordinates are 0). Also, $L_2$ is a subset of the set $\{x_{ij}=a_1a_2\cdots a_n|a_l=1\mbox{ if }l=i,j\mbox{; otherwise }a_l=0,\mbox{ for }1\leq i<j\leq n\}$. Let $H$ be a graph with $V(H)=[n]$, where $i, j\in V(H)$ are adjacent if and only if $x_{ij}\in L_2$. Now, we prove that $G\cong S(H)$.

Let $\pi^{-1}$ be the bijection in the proof of (i) $\Longrightarrow$ (ii). We prove that for any clique $C$ of $H$, $\pi^{-1}(C)\in V(G)$ by induction on $|C|$. When $|C|=1$ or 2, it holds by the definition of $H$. Assume that for any clique $C$ of size $k-1$ ($k\geq 3$), $\pi^{-1}(C)\in V(G)$. Now, let $|C|=k$. For any three different vertices $r, s, t\in C$, $C-\{r\},C-\{s\},C-\{t\}$ are the cliques of $H$ of size $k-1$. By the inductive hypothesis, $\pi^{-1}(C-\{r\}),\pi^{-1}(C-\{s\}),\pi^{-1}(C-\{t\})\in V(G)$.  We observe that the Hamming distances $d_{Q_n}(\pi^{-1}(C-\{r\}),\pi^{-1}(C-\{s\}))=d_{Q_n}(\pi^{-1}(C-\{r\}),\pi^{-1}(C-\{t\}))=d_{Q_n}(\pi^{-1}(C-\{s\}),\pi^{-1}(C-\{t\}))=2$, and further $\pi^{-1}(C)$ is the median of $\pi^{-1}(C-\{r\})$, $\pi^{-1}(C-\{s\})$ and $\pi^{-1}(C-\{t\})$. Since $G$ is a median graph, $\pi^{-1}(C)\in V(G)$.

Thus, $\pi^{-1}$ is an isomorphism from $S(H)$ to an induced subgraph of $G$. Let us consider the crossing graph of $G$, $G^{\#}$. Recall that $\theta_i=F^G_{0^nx_i}=\{ab\in E(G)|a=a_1\cdots a_{i-1}0a_{i+1}\cdots a_n,b=a_1\cdots a_{i-1}1a_{i+1}\cdots a_n\}$ for $i\in[n]$. We prove that:

{\bf Claim 1.} For $1\leq i<j\leq n$, $\theta_i$ and $\theta_j$ cross if and only if $x_{ij}\in L_2$.

{\em Sufficiency.} Since $x_{ij}\in L_2$, $0^nx_ix_{ij}x_j0^n$ is a $\theta_i,\theta_j$-alternating 4-cycle. By Proposition \ref{pro:Crossin4Cycle}, $\theta_i$ and $\theta_j$ cross.

{\em Necessity.} Since $\theta_i$ and $\theta_j$ cross, by Proposition \ref{pro:Crossin4Cycle}, there exists an edge $e\in\theta_j\cap E(G[U_{0^nx_i}])$. Since $0^nx_j\,\Theta\,e$, by Propositions \ref{pro:ConvexityLemma} and \ref{pro:UisConvex}, $x_j \in U_{0^nx_i}$, and further $x_{ij}\in L_2$.

By Claim 1, we deduce $G^{\#}=H$. Combined with Propositions \ref{pro:SimplexvsCrossing} and \ref{pro:VertexNumber}, we have $|V(G)|=|V(S(H))|$. Combined with the fact that $\pi^{-1}$ is an isomorphism from $S(H)$ to an induced subgraph of $G$, $G\cong S(H)$.
\end{proof}

\section{Conclusions and the problem of characterizing the daisy cubes}

In this paper, we have characterized simplex graphs through four equivalent descriptions, which are respectively associated with the daisy cubes, forbidden pc-minors, peripheral $\Theta$-classes, and vertices whose degree equals the isometric dimension. Additionally, following Betre et al.\ \cite{bze24}, we re-proved that an abstract simplicial complex of a finite set can be represented as the clique complex of a graph if and only if it satisfies the Median Property (or equivalently, the Weak Median Property), using the graph-theoretical method.

Taranenko \cite{t20} provided a characterization of daisy cubes, stating that a connected graph $G$ is a daisy cube if and only if it can be obtained from the one-vertex graph by a sequence of special expansions, known as {\em $\leq$-expansions}. However, this characterization relies on the partial order `$\leq$' on $B^n$, which depends on the labelling of binary strings assigned to the vertices. This raises the question: Can there exist a structural characterization that is independent of labelling? By Proposition \ref{pro:DaisyCubeisPCMinorClosed}, the class of daisy cubes is pc-minor-closed. This naturally leads to the following problem:
%a natural problem is whether the forbidden pc-minors of the class of daisy cubes can be determined.
\begin{que}\label{que:daisycubes}
Characterize the daisy cubes by using their forbidden pc-minors.
\end{que}

If a partial cube $G$ is not a daisy cube but all of its proper pc-minors are, then $G$ is called a {\em minimal forbidden pc-minor} of daisy cubes. To characterize daisy cubes via forbidden pc-minors, the objective is to identify all minimal forbidden pc-minors of daisy cubes. For notation, similar to $0^n$, denote $1^n:=11\cdots 1\in B^n$. Define $Q_n^{-+}$ as the graph obtained by deleting $1^n$ from $Q_n$ and adding a pendant vertex attached to the vertex $00\cdots 01$, and $Q_n^{--}$ as the graph obtained by deleting $0^n$ and $1^n$ from $Q_n$. Examples of $Q_3^{-+}$ and $Q_4^{--}$ are shown in Fig. \ref{fig:ForbiddenPCMinor1}. It can be verified that $Q_n^{-+}$ ($n\geq 2$) and $Q_n^{--}$ ($n\geq 3$) are minimal forbidden pc-minors of daisy cubes. Specifically, $Q_2^{-+}$ is the 3-path $P_4$ and $Q_3^{--}$ is the 6-cycle $C_6$.

\begin{figure}[!htbp]
\centering
\setlength{\unitlength}{1mm}
\begin{picture}(60,45)
\put(0,0){\scalebox{0.5}[0.5]{\includegraphics{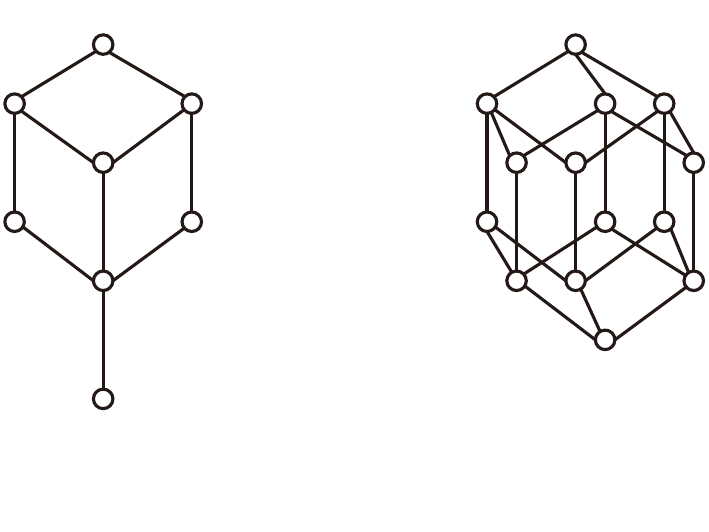}}}
\put(6,5){$Q_3^{-+}$}
\put(47,5){$Q_4^{--}$}
\end{picture}
\caption{$Q_3^{-+}$ and $Q_4^{--}$.}{\label{fig:ForbiddenPCMinor1}}
\end{figure}

In addition, let us give some more examples of minimal forbidden pc-minors of daisy cubes. The {\em Cartesian product} of graphs $G_1$ and $G_2$, denoted by $G_1 \Box G_2$, is the graph with vertex set $V(G_1)\times V (G_2)$ and two vertices $(u, v),(u', v')$ being adjacent if and only if either $u=u'$ and $vv'\in E(G_2)$ or $uu'\in E(G_1)$ and $v=v'$. Denote $H_n:=P_3\Box Q_{n-2}$ ($n\geq 2$), i.e., $V(H_n)=\{v=a_1a_2\cdots a_{n-1}|a_1=0,1,2\mbox{ and }a_i=0,1\mbox{ for }2\leq i\leq n-1\}$ and two vertices are adjacent if and only if exactly one coordinate of their corresponding strings differs by 1 while the other coordinates are the same. Let $H_n^{--}$ be the graph obtained by deleting $00\cdots 0$ and $21\cdots 1$ from $H_n$. Then $H_n^{--}$ ($n\geq 4$) is also a minimal forbidden pc-minor of daisy cubes. Example of $H_4^{--}$ is shown in Fig. \ref{fig:ForbiddenPCMinor2}.

\begin{figure}[!htbp]
\centering\setlength{\unitlength}{1mm}
\begin{picture}(32.5,27.5)
\put(0,0){\scalebox{0.5}[0.5]{\includegraphics{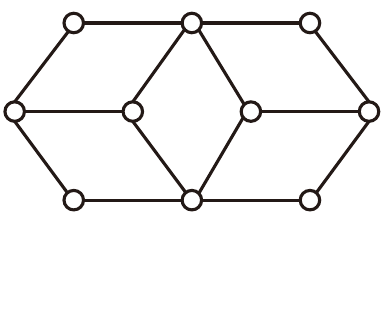}}}
\put(14,3){$H^{--}_4$}
\end{picture}
\caption{The partial cube $H^{--}_4$.}{\label{fig:ForbiddenPCMinor2}}
\end{figure}

Identifying other minimal forbidden pc-minors of daisy cubes remains a significant challenge and  a central focus for future research.

\vskip 0.2 cm
\noindent{\bf Acknowledgements:} This work is partially supported by National Natural Science Foundation of China (Grants No. 12071194, 11571155).

\end{document}